\newtheorem{theorem}{Theorem}[section]
\newtheorem{corollary}{Corollary}[theorem]
\newtheorem{lemma}{Lemma}[section]
\newtheorem{remark}{Remark}[section]
\newtheorem{example}{Example}[theorem]
\newtheorem*{acknowledgement}{\textnormal{\textbf{Acknowledgements}}}
\title[Birkhoff-James orthogonality and smoothness of bounded linear operators]
 {Birkhoff-James orthogonality and smoothness of bounded linear operators} 
\author{K. Paul, D. Sain  and P. Ghosh}
\newcommand{\acr}{\newline\indent}
\address{\llap{\,}Department of Mathematics\acr
                              Jadavpur University\acr
                              Kolkata 700032\acr
                              West Bengal\acr
                              INDIA}
\email{ kalloldada@gmail.com; saindebmalya@gmail.com; ghosh.puja1988@gmail.com}
\subjclass[2010]{ Primary 46B20, Secondary 46B50}
\keywords{Smoothness; Birkhoff-James orthogonality ; Bounded linear operator}
\thanks{ Third author would like to thank CSIR, Govt. of India respectively  for the financial support.}
\begin{document}

\begin{abstract}
We present a sufficient condition for smoothness of  bounded linear operators on  Banach spaces for the first time. Let $T, A \in B(\mathbb{X}, \mathbb{Y}),$ where $\mathbb{X}$ is a real Banach space and $\mathbb{Y}$ is a real normed linear space. We find sufficient condition for $ T \bot_{B} A   \Leftrightarrow Tx \bot_{B} Ax $ for some $ x \in S_{\mathbb{X}}$ with $  \|Tx\| = \|T\|, $ and use it to show that $T$ is a smooth point in $ B(\mathbb{X}, \mathbb{Y}) $ if  $T$ attains its norm at unique (upto muliplication by scalar) vector $  x \in S_{\mathbb{X}},$ $Tx$ is a smooth point of $\mathbb{Y} $ and {\em sup}$_{y \in C} \|Ty\| < \|T\|$ for all  closed subsets $C$ of $S_{\mathbb{X}}$ with $d(\pm x,C) > 0.$ For operators on a  Hilbert space $ \mathbb{H}$ we show that $ T \bot_{B} A   \Leftrightarrow Tx \bot_{B} Ax $ for some $ x \in S_{\mathbb{H}}$ with $  \|Tx\| = \|T\| $ if and only if  the norm attaining set $M_T = \{ x \in S_{\mathbb{H}} : \|Tx\| = \|T\| \} = S_{H_0}$ for some finite dimensional subspace $H_0$ and $ \|T\|_{{H_o}^{\bot}} < \|T\|.$ We also characterize smoothness of compact operators on normed  spaces and  bounded linear operators on  Hilbert spaces.
\end{abstract}
\maketitle

\section{Introduction} 
Let $ (\mathbb{X},\|.\|) $ be a real normed  space. Let $ B_\mathbb{X}=\{x \in \mathbb{X} : \|x\| \leq 1\} $ and $ S_\mathbb{X}=\{x \in \mathbb{X} : \|x\|=1\} $ be the unit ball and the unit sphere of the normed space $ \mathbb{X}$ respectively. Let $ B(\mathbb{X},\mathbb{Y}) ( K(\mathbb{X},\mathbb{Y}))$ denote the set of all bounded (compact) linear operators from $ \mathbb{X} $ to another real normed  space $ \mathbb{Y}. $ We write $ B(\mathbb{X},\mathbb{Y}) = B(\mathbb{X}) $ and $ K(\mathbb{X},\mathbb{Y}) = K(\mathbb{X}) $ if $ \mathbb{X} = \mathbb{Y}. $ \\
$ T \in B(\mathbb{X},\mathbb{Y}) $ is said to attain its norm at $ x_{0} \in S_\mathbb{X} $ if $ \| Tx_{0} \| = \| T \|. $ Let $M_T$ denote the set of all unit vectors in $S_\mathbb{X}$ at which $T$ attains norm, i.e.,
\[ M_T = \{ x \in S_\mathbb{X} \colon \|Tx\| = \|T\| \}. \] 
  The notion of Birkhoff-James orthogonality \cite{B} plays a very important role in the geometry of Banach spaces. For any two elements $ x,y \in \mathbb{X}, $ $ x $ is said to be orthogonal to $ y $ in the sense of Birkhoff-James, written as $ x \bot_B y, $ if and only if  
\[ \|x\| \leq \|x+\lambda y\| ~ \forall \lambda \in \mathbb R. \]
Similarly for  $ T,A \in  \mathbb{B}(\mathbb{X}, \mathbb{Y}), $ $ T $ is said to be orthogonal to $ A,$ if and only if  
  \[ \|T\| \leq \|T+ \lambda A\| ~\forall \lambda \in \mathbb R. \] 
  An element  $0\neq  x \in \mathbb{X}$ is said to be a smooth point if there is a unique hyperplane $H$ supporting $B(0,\|x\|)$ at $x$. Equivalently $x$ is said to be a smooth point if there is a unique linear functional $f \in {\mathbb{X}}^{*} $ such that $\|f\|=1$ and $f(x) = \|x\|.$  From \cite{J} it follows that $x$ is a smooth point if and only if  $ x \bot_B y $ and $ x \bot_B z $ implies $ x \bot_B (y+z)$ i.e., if and only if  Birkhoff-James orthogonality is right additive at $x.$

\medskip In any  normed  space $\mathbb{X}$, if  $ x \in M_T$ with  $ Tx \bot_B Ax $ then $ T\bot_B A.$ The question that arises is when the converse is true i.e., if $ T\bot_B A$  then whether there exists $ x \in M_T$ such that  $  Tx \bot_B Ax .$   We  find  sufficient conditions for  $ T\bot_B A  \Leftrightarrow    Tx \bot_B Ax   $ for some $  x\in M_T.$   In Theorem $ 2.1 $ of  \cite{SP}  Sain and Paul proved  that if $ \mathbb{X} $ is a finite dimensional real normed  space and $M_T = D \cup(-D) $  ($ D $ is a connected subset of $ S_\mathbb{X} $) then for any $ A \in B(\mathbb{X}), T \bot_{B} A\Leftrightarrow  Tx \bot Ax$ for some $x \in M_T.$ In \cite{SPH} Sain et al. proved that if $T$ is a bounded linear operator on a normed  space $\mathbb{X}$  of dimension $2$ with $ T \bot_B A \Leftrightarrow Tx \bot_B Ax$ for some $ x \in M_T,$  then $M_T = D \cup(-D) ,$ where $ D $ is a connected subset of $ S_\mathbb{X} .$  

\medskip In this paper we prove that if $ \mathbb{X} $ is a reflexive Banach space and  $ T $ is a compact linear operator from $\mathbb{X}$ to $\mathbb{Y}$ with $M_T =  D \cup (-D) $ ($ D $ is a compact connected subset of $ S_{\mathbb{X}} $) then for any compact linear operator $A,$ $ T \bot_{B} A \Leftrightarrow  Tx \bot Ax$ for some $x \in M_T.$ This result substantially improves upon Theorem $ 2.1 $ of Sain and Paul \cite{SP}. Examples may be given to show that if $ T, A $ are bounded  instead of compact, then $ T \bot_{B} A $ does not ensure the existence of $ x \in M_T $ such that $ Tx \bot_{B} Ax. $ To get a result in this direction for bounded linear operators, we need to have certain additional condition(s) on $ T. $ We find sufficient conditions  for which $T \bot_{B} A \Leftrightarrow Tx \bot_{B} Ax $ for some $ x \in M_T. $

\medskip In case of Hilbert space we find  conditions  for $T\bot_B A  \Leftrightarrow  \langle Tx, Ax \rangle = 0 $ for some $x \in M_T.$    In a Hilbert space $ \mathbb{H}, $ Bhatia and \v{S}emrl \cite{BS} and Paul  \cite{P} independently proved that $ T \bot_B A $ if and only if there exists $ x_n \in M_T $  such that  $ \langle Tx_n, Ax_n\rangle  \longrightarrow 0. $  It follows then that if the Hilbert space $\mathbb{H}$  is finite dimensional,  $T \bot_B A \Leftrightarrow  \langle Tx, Ax \rangle = 0$ for some $  x \in M_T.$  
In case of an infinite dimensional Hilbert space $\mathbb{H}$, examples can be given to show that $T\bot_B A$ but $M_T = \emptyset $ and so the question of whether $\langle Tx, Ax\rangle =0$ for $x \in M_T$ does not arise. Even if $M_T \neq \emptyset$, there are operators $T,A$ such that $T\bot_B A$ but there does not exist $x \in M_T$ with $\langle Tx, Ax \rangle =0.$ This implies that for such a result to be true in an infinite dimensional Hilbert space, we need to impose certain additional condition(s) on $T$. We prove that  $~T\bot_B A$ for $ A \in B(\mathbb{H})\Leftrightarrow Tx \bot Ax$ for some $x \in M_T$ if and only if  $M_T = S_{H_0}$and $\|T\|_{{H_0}^\bot} < \|T\|,$  where $H_0$ is a finite dimensional subspace of $\mathbb{H}.$

\medskip As an application of these results on Birkhoff-James orthogonality and operator norm attainment, we prove certain characterizations of smoothness of operators. This is a classical area of research in the geometry of Banach spaces and has been studied in great detail by several mathematicians including Holub \cite{HO}, Heinrich \cite{HR}, Hennefeld \cite{HF}, Abatzoglou \cite{A}, Kittaneh and Younis \cite{KY}. 
Smoothness of bounded linear operators on some particular spaces like $\ell^p$ spaces, etc. have been studied by Werner \cite{W} and Deeb and Khalil \cite{DK}.   Although characterization of smoothness of compact linear operators on normed  spaces have been obtained,  there is no  such result for bounded linear operators on a general normed space.  To the best of our knowledge, this is for the first time that a sufficient condition for smoothness of a bounded linear operator defined on a Banach space is being presented. We show that if $\mathbb{X}$ is a Banach space then $ T \in B(\mathbb{X}, \mathbb{Y}) $ is smooth if  $ T $ attains its norm only on $ \pm x \in S_{\mathbb{X}}, $ $ Tx $ is  a smooth point of $ \mathbb{Y} $ and {\em sup}$_{y \in C, \|y\|=1} \|Ty\| < \|T\|$ for all  closed subsets $C$ of $S_{\mathbb{X}}$ with $d(\pm x,C) > 0$. We use the result of \cite{P} to prove that  if $T$ is a bounded linear operator on a Hilbert space $\mathbb{H}$ then $T$  is  smooth if and only if $T$ attains norm only at $\pm x$ and {\em sup}$\{\|Ty\|: x\bot y, y\in S_\mathbb{H}\} < \|T\|$.

\medskip 


\section{Operator norm attainment and Birkhoff-James orthogonality in a Banach space}
\noindent We first prove that if $T$ is a compact linear operator on a reflexive Banach space $ \mathbb{X} $ with $M_T = D \cup (-D)$ ($D$ is a non-empty compact connected subset of $S_{\mathbb{X}}$), then for any $A \in K(\mathbb{X},\mathbb{Y}),$ $T \bot_B A \Leftrightarrow Tx \bot_B Ax$ for some $x \in M_T.$ We begin with the following lemma.
\begin{lemma}\label{lemma:lambda}
Let $T \in B(\mathbb{X},\mathbb{Y})$ and $M_T = D \cup (-D)$ ($D$ is a non-empty compact connected subset of $S_{\mathbb{X}}$). Then for any $A \in B(\mathbb{X},\mathbb{Y}), $ either  there exists $x \in M_T$ such that $Tx \bot_B Ax$ or there exists $ \lambda_0 \neq 0 $ such that 
$ \| Tx + \lambda_0Ax\| < \|Tx\|~ \forall x \in M_T.$
\end{lemma}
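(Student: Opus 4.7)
The plan is to assume no $x \in M_T$ satisfies $Tx \bot_B Ax$ and then produce a uniform $\lambda_0$. The starting observation is that for each fixed $x$, the map $\varphi_x(\lambda) = \|Tx + \lambda Ax\|$ is a convex function of $\lambda$ with $\varphi_x(0) = \|T\|$. Failure of Birkhoff--James orthogonality means $\varphi_x(\lambda) < \|T\|$ for some $\lambda \neq 0$, and a direct convexity argument shows that such $\lambda$'s cannot be of both signs: if $\varphi_x(\lambda_1) < \varphi_x(0)$ and $\varphi_x(\lambda_2) < \varphi_x(0)$ for $\lambda_1 < 0 < \lambda_2$, then expressing $0$ as a convex combination of $\lambda_1,\lambda_2$ contradicts $\varphi_x(0) = \|T\|$. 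So to each $x \in D$ we can unambiguously assign a sign.

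Next I would partition $D = D^+ \sqcup D^-$, where
\[ D^{\pm} = \{x \in D : \exists\, \lambda \gtrless 0 \text{ with } \|Tx + \lambda Ax\| < \|T\|\}. \]
I would show both sets are open in $D$: if $x_0 \in D^+$ with witness $\lambda_{x_0} > 0$, then joint continuity of $(x,\lambda)\mapsto \|Tx + \lambda Ax\|$ combined with $\|Ty\| \le \|T\| = \|Tx_0\|$ gives a neighborhood of $x_0$ in which the same $\lambda_{x_0}$ still forces the strict inequality. Connectedness of $D$ then forces $D = D^+$ or $D = D^-$; assume WLOG $D = D^+$.

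Now the goal is to upgrade the pointwise witnesses $\lambda_x > 0$ to a single $\lambda_0 > 0$ working on all of $D$. For each $x \in D$ pick a witness $\lambda_x > 0$, use continuity to get an open neighborhood $U_x$ of $x$ on which $\|Ty + \lambda_x Ay\| < \|T\|$, and then apply compactness of $D$ to extract a finite subcover $U_{x_1}, \dots, U_{x_n}$. Setting $\lambda_0 := \min_i \lambda_{x_i} > 0$, the key finishing move is convexity of $\varphi_y$: for any $y \in D$ we have $y \in U_{x_i}$ for some $i$, so $\varphi_y(\lambda_{x_i}) < \|T\| = \varphi_y(0)$, and convexity yields $\varphi_y(\lambda_0) < \|T\|$ since $\lambda_0 \in (0, \lambda_{x_i}]$. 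The symmetry $M_T = D \cup (-D)$ extends the inequality from $D$ to $-D$ trivially.

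The conceptual heart is the dichotomy step: justifying that the witness sign is locally constant and therefore globally constant on the connected set $D$. The compactness-plus-convexity argument that turns pointwise witnesses into a uniform witness is the second ingredient, and it is where the hypothesis that $D$ (rather than an arbitrary subset of $S_\mathbb{X}$) is compact becomes essential. No norm attainment of $A$ or compactness of $A$ is used; only compactness of $D$ and continuity of $T$ and $A$ matter for the quantitative step.
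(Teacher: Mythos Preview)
Your argument is correct and follows essentially the same route as the paper: split $D$ into the ``positive-witness'' and ``negative-witness'' parts, use connectedness of $D$ to force one of them to be all of $D$, then use compactness plus convexity of $\lambda\mapsto\|Tx+\lambda Ax\|$ to upgrade pointwise witnesses to a uniform $\lambda_0$. The only cosmetic difference is that you explicitly justify the disjointness of $D^+$ and $D^-$ via convexity, whereas the paper tacitly assumes it when invoking connectedness.
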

\begin{proof} 
Suppose that there exists no $x \in M_T$ such that $Tx \bot_B Ax$. 
  Let \[   W_1=\{ x \in D : \|Tx+\lambda_x Ax\| < \|T\| ~\mbox{for some}~ \lambda_x > 0\}\]
\[\mbox{and}~  W_2=\{x \in D : \|Tx+\lambda_x Ax\| < \|T\| ~\mbox{for some}~ \lambda_x < 0\}.\] 
Then it is easy to check that both $W_1,W_2$ are open sets in $D$ and $ D = W_1 \cup W_2 $. The connectedness of $D$ ensures that either $D = W_1$ or $D = W_2.$

 Consider the case $D = W_1.$   Then for each $ x \in D,$ there exists $\lambda_x \in(0,1) $ such that $ \|Tx + \lambda_x Ax \| < \|Tx\| = \|T\|.$ 
By the convexity of the norm function it now follows that 
\[ \|Tx + \lambda Ax \| < \|Tx\| = \|T\| ~ \forall ~ \lambda \in (0, \lambda_x). \] 
\noindent We consider the continuous function $ g : S_X \times [ -1 , 1] \longrightarrow  \mathbb{R} $ defined by 
$$g(x,\lambda) = \|Tx + \lambda Ax\|.$$
 We have $ g(x, \lambda_x) = \| Tx +  \lambda_x Ax \| < \|T\|$ and so by the continuity of $g$, there exists $r_x , \delta_x > 0$ such that 
 $ g(y, \lambda) < \|T\| ~~~ \forall ~~~ y \in B(x, r_x) \cap S_{\mathbb{X}} ~\mbox{ and}~ \forall ~ \lambda \in ( \lambda_x - \delta_x ,  \lambda_x + \delta_x).$ 
Let $ y \in B( x, r_x) \cap D $. Then for any $ \lambda \in (0, \lambda_x ) $ we get
\begin{eqnarray*}
& ~~~ & Ty + \lambda Ay = \Big(1 - \frac{\lambda}{\lambda_x}\Big) Ty + \frac{\lambda}{\lambda_x} (Ty + \lambda_x Ay )\\
& \Rightarrow & \| Ty + \lambda Ay \| < (1 - \frac{\lambda}{\lambda_x})\|T\| + \frac{\lambda}{\lambda_x} \|T\| \\
& \Rightarrow & \| Ty + \lambda Ay \| < \| T \|
\end{eqnarray*}
Therefore $  g(y,\lambda)  < \|T\| ~~~ \forall ~~~ y \in B(x, r_x) \cap D $ and $ \forall \lambda \in (0, \lambda_x).$ \\
\noindent Consider the open cover $ \{ B(x, r_x) \cap D : x \in D \} $   of $ D.$ By the compactness of $D$, this cover has a finite subcover $\{ B(x_i,r_{x_i}) \cap D : i=1,2,\ldots,n\} $ so that  
  \[ D \subset \cup_{i=1}^{n} B(x_i,r_{x_i}) .\]
\noindent Choose $ \lambda_0 \in \cap_{i=1}^{n} (0, \lambda_{x_{i}}). $
Then for any $x \in M_T$, $\|Tx+\lambda_0 Ax\| < \|T\|$. \\
If $ D=W_2$ then similarly we can show that there exists some $ \lambda_0 < 0 $ such that 
for any $x \in M_T$, $\|Tx+\lambda_0 Ax\| < \|T\|$. \\
This completes the proof of lemma.
\end{proof}
\begin{theorem}\label{theorem:compact operator}
Let $\mathbb{X}$ be a reflexive Banach space and $\mathbb{Y}$ be any normed space. Let $T \in K(\mathbb{X},\mathbb{Y})$ and $M_T = D \cup (-D)$ ($D$ is a non-empty compact connected subset of $S_{\mathbb{X}}$). Then for any $A \in K(\mathbb{X},\mathbb{Y}), T \bot_B A $ if and only if  there exists $x \in M_T$ such that $Tx \bot_B Ax$.
\end{theorem}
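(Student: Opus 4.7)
The backward direction is immediate from the definition of Birkhoff-James orthogonality: if $x \in M_T$ and $Tx \bot_B Ax$, then for every $\lambda \in \mathbb{R}$, $\|T + \lambda A\| \geq \|(T+\lambda A)x\| = \|Tx + \lambda Ax\| \geq \|Tx\| = \|T\|$, so $T \bot_B A$. This direction uses neither compactness nor reflexivity.

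For the forward direction, I argue by contradiction. Assume $T \bot_B A$ and suppose that no $x \in M_T$ satisfies $Tx \bot_B Ax$. By Lemma \ref{lemma:lambda} there is $\lambda_0 \neq 0$ with $\|Tx + \lambda_0 Ax\| < \|T\|$ for every $x \in M_T$. The goal is to deduce $\|T + \lambda_0 A\| < \|T\|$, which contradicts $T \bot_B A$. Two structural facts will be used throughout: $T + \lambda_0 A$ is compact since $K(\mathbb{X},\mathbb{Y})$ is a linear subspace of $B(\mathbb{X},\mathbb{Y})$; and on a reflexive Banach space every compact operator attains its norm on $S_{\mathbb{X}}$, via weak subsequential compactness of $B_{\mathbb{X}}$ combined with the fact that compact operators send weakly convergent sequences in bounded sets to norm-convergent sequences.

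The plan is to examine a norm attainer of $T + \lambda_0 A$. Choose a sequence $x_n \in S_{\mathbb{X}}$ with $\|(T + \lambda_0 A)x_n\| \to \|T + \lambda_0 A\|$; reflexivity yields a weakly convergent subsequence $x_n \rightharpoonup x_\infty \in B_{\mathbb{X}}$, and the compactness of both $T$ and $A$ upgrades this to $Tx_n \to Tx_\infty$ and $Ax_n \to Ax_\infty$ in norm, so that $(T + \lambda_0 A)x_n \to (T + \lambda_0 A)x_\infty$ in norm, $\|x_\infty\| = 1$, and $x_\infty$ attains $\|T + \lambda_0 A\|$. Under the contradiction hypothesis $\|T + \lambda_0 A\| \geq \|T\|$, the triangle inequality forces $\|Tx_\infty\| \geq \|T\| - \lambda_0 \|A\|$. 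If one can show $x_\infty \in M_T$, then Lemma \ref{lemma:lambda} gives $\|(T+\lambda_0 A)x_\infty\| < \|T\|$, which together with $\|(T+\lambda_0 A)x_\infty\| = \|T + \lambda_0 A\| \geq \|T\|$ is the desired contradiction.

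The principal obstacle is therefore to force $x_\infty \in M_T$. My strategy is to exploit the flexibility in Lemma \ref{lemma:lambda} to pick $\lambda_0$ arbitrarily small (visible in its convexity step, where any $\lambda$ between $0$ and the chosen $\lambda_0$ also satisfies the strict inequality) together with a refinement of its proof producing a norm-open neighborhood $U$ of $M_T$ in $S_{\mathbb{X}}$ and a constant $c > 0$ for which $\|(T + \lambda A)y\| \leq \|T\| - c\lambda$ uniformly for $y \in U$ and all admissibly small $\lambda > 0$ of the same sign as $\lambda_0$. Taking $\lambda_n \downarrow 0$ and corresponding norm attainers $x_n$ of $T + \lambda_n A$, the estimate $\|Tx_n\| \geq \|T\| - \lambda_n \|A\| \to \|T\|$ forces the weak limit $x_\infty$ into $M_T$. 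The compactness of $T$ and $A$ then makes $\|(T + \lambda_n A)(x_n - x_\infty)\| \to 0$, so $\|(T + \lambda_n A)x_n\|$ and $\|(T + \lambda_n A)x_\infty\|$ differ by a vanishing amount, whereas the uniform gap at the point $x_\infty \in U$ together with $\|(T + \lambda_n A)x_n\| \geq \|T\|$ forces a discrepancy of order at least $c\lambda_n$; the resulting contradiction closes the argument.
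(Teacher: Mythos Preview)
Your setup for the forward direction---taking $\lambda_n \downarrow 0$, choosing norm attainers $x_n$ of $T + \lambda_n A$, passing to a weakly convergent subsequence with limit $x_\infty$, and concluding $x_\infty \in M_T$---coincides with the paper's (which uses $\lambda_n = 1/n$). The gap lies in your closing contradiction. You establish that $a_n - b_n \to 0$, where $a_n = \|(T+\lambda_n A)x_n\|$ and $b_n = \|(T+\lambda_n A)x_\infty\|$, while simultaneously $a_n - b_n \geq c\lambda_n$. But $c\lambda_n \to 0$ as well, so these two statements are perfectly compatible and no contradiction follows. To rescue your argument you would need $\|T(x_n - x_\infty)\| = o(\lambda_n)$, and compactness of $T$ furnishes no such rate: it gives only $Tx_n \to Tx_\infty$ with no comparison to $\lambda_n$.

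The paper's endgame is different. Once $x_\infty \in M_T$ is in hand, instead of trying to bound $\|T+\lambda A\|$ from above, the paper shows $Tx_\infty \bot_B Ax_\infty$ directly, contradicting the standing assumption that no point of $M_T$ has this property. For $\lambda > 0$: since $x_n$ attains $\|T + \tfrac{1}{n}A\| \geq \|T\| \geq \|Tx_n\|$, convexity of the norm forces $\|Tx_n + \lambda Ax_n\| \geq \|Tx_n\|$ for every $\lambda > 1/n$, and letting $n\to\infty$ gives $\|Tx_\infty + \lambda Ax_\infty\| \geq \|Tx_\infty\|$. For $\lambda < 0$: Lemma~\ref{lemma:lambda} now applies at $x_\infty$ and gives $\|Tx_\infty + \lambda_0 Ax_\infty\| < \|Tx_\infty\|$ with $\lambda_0 > 0$; if also $\|Tx_\infty + \lambda_1 Ax_\infty\| < \|Tx_\infty\|$ for some $\lambda_1 < 0$, then writing $Tx_\infty$ as a convex combination of $Tx_\infty + \lambda_0 Ax_\infty$ and $Tx_\infty + \lambda_1 Ax_\infty$ yields $\|Tx_\infty\| < \|Tx_\infty\|$.
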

\begin{proof} 
Suppose there exists no $x \in M_T$ such that $Tx \bot_B Ax$. 
Then by applying Lemma~\ref{lemma:lambda}, we get some $ \lambda_0 \neq 0$ such that 
$$\|Tx+\lambda_0 Ax\| < \|T\|~\forall x \in M_T.$$
Without loss of generality we assume that $ \lambda_0 > 0.$ 

\noindent For each $ n \in \mathbb{N}, $ the  operator $(T+\frac{1}{n} A),$  being compact on a reflexive normed space, attains its norm. So there exists $  x_n \in S_\mathbb{X}$ such that $\|T+\frac{1}{n} A\| = \|(T+\frac{1}{n} A)x_n\|$.

Now $\mathbb{X}$ is reflexive and  so $ B_{\mathbb{X}} $ is weakly compact,  hence we can find a subsequence $\{x_{n_k}\}$ of $\{x_n\}$ such that $ x_{n_k} \rightharpoonup x_0 $ (say) in $ B_{\mathbb{X}} $  weakly. Without loss of generality we assume that $ x_{n} \rightharpoonup x_0 $ weakly. Then $T, A$ being compact, $Tx_{n} \longrightarrow Tx_0$ and $Ax_{n} \longrightarrow Ax_0.$  
As $ T \bot_B A $  we have $\|T+\frac{1}{n} A\| \geq \|T\|  ~\forall n \in \mathbb{N}$ and so $ \|Tx_{n} + \frac{1}{n} Ax_{n}\|  \geq \|T\| \geq \|Tx_n\|~\forall n \in \mathbb{N}$. Letting $ n \longrightarrow \infty $ we get $ \|Tx_0\|  \geq \|T\| \geq \|Tx_0\| .$  Then $ x_0 \in M_T$. 

We finally show that $ Tx_0 \bot_B Ax_0.$ \\
For any $ \lambda > \frac{1}{n}$ we claim that  $\|Tx_n+\lambda Ax_n\| \geq \|Tx_n\|$. Otherwise
\begin{eqnarray*}
Tx_n+\frac{1}{n} Ax_n & = & \Big(1 - \frac{1}{n \lambda}\Big) Tx_n + \Big(\frac{1}{n \lambda}\Big)(Tx_n + \lambda Ax_n) \\
\Rightarrow  \|Tx_n+\frac{1}{n} Ax_n\| & < & \Big(1 - \frac{1}{n \lambda}\Big) \|Tx_n\| + \Big(\frac{1}{n \lambda}\Big) 
                                                                                                           \|Tx_n \|\\		\Rightarrow  \|Tx_n+\frac{1}{n} Ax_n\|  & < & \|Tx_n\|~ \mbox{, a contradiction.}
\end{eqnarray*}
Choose $\lambda >0 $. Then there exists $ n_0 \in \mathbb{N}$ such that $ \lambda > \frac{1}{n_0} $ and so for all $ n \geq n_0$ we get, $$\|Tx_n+\lambda Ax_n\| \geq \|Tx_n\|.$$
Letting $ n \longrightarrow \infty $ we get 
$$\| Tx_0 + \lambda Ax_0 \| \geq \|Tx_0\| \ldots \ldots \ldots (i)$$
We next show that  $\| Tx_0 + \lambda Ax_0 \| \geq \|Tx_0\|$ for each $ \lambda < 0.$ Suppose there exist some $ \lambda_1 < 0 $ such that $\| Tx_0 + \lambda_1 Ax_0 \| < \|Tx_0\|$. 
We already have $\| Tx_0 + \lambda_0 Ax_0 \| < \|Tx_0\|$. Then 
\begin{eqnarray*}
 T x_0 & = & ( 1 - \frac{\lambda_0}{\lambda_0 - \lambda_1})( Tx_0 + \lambda_0 Ax_0 ) + (\frac{\lambda_0}{\lambda_0 - \lambda_1})( Tx_0 + \lambda_1 Ax_0 ) \\
\Rightarrow \| Tx_0\| & < & ( 1 - \frac{\lambda_0}{\lambda_0 - \lambda_1}) \| Tx_0 \| +  (\frac{\lambda_0}{\lambda_0 - \lambda_1}) \|Tx_0\| \\
\Rightarrow \|Tx_0\| & < & \|Tx_0\|  \mbox{, a contradiction} 
\end{eqnarray*}
Thus $\| Tx_0 + \lambda Ax_0 \| \geq \|Tx_0\|$ for each $ \lambda < 0.$
This along with (i) shows that $ Tx_0 \bot_B Ax_0$. This completes the proof of the theorem.
\end{proof}
\begin{corollary}\label{corollary:compact*}
 Let $T \in K(\mathbb{X},\mathbb{Y}) $ and $ M_{T^{*}} = D \cup(-D)$  ($D$ is a compact connected subset of $S_{\mathbb{Y}^*}$). Then for any $A \in K(\mathbb{X},\mathbb{Y}), T \bot_B A $ if and only if  there exists $g \in M_{T^*}$ such that $T^*g \bot_B A^*g$.
\end{corollary}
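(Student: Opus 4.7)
The plan is to deduce Corollary~\ref{corollary:compact*} from Theorem~\ref{theorem:compact operator} by passing to adjoints. Three elementary facts drive the reduction: by Schauder's theorem, $T, A \in K(\mathbb{X},\mathbb{Y})$ implies $T^*, A^* \in K(\mathbb{Y}^*, \mathbb{X}^*)$; since $\|S\| = \|S^*\|$ for every bounded linear $S$, we have $\|T + \lambda A\| = \|T^* + \lambda A^*\|$ for every $\lambda \in \mathbb{R}$, so $T \bot_B A$ if and only if $T^* \bot_B A^*$; and by hypothesis $M_{T^*} = D \cup (-D)$ with $D$ compact and connected in $S_{\mathbb{Y}^*}$. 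Hence the corollary reduces to showing that $T^* \bot_B A^*$ is equivalent to the existence of $g \in M_{T^*}$ with $T^* g \bot_B A^* g$.

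The $(\Leftarrow)$ direction is immediate from the chain $\|T^* + \lambda A^*\| \geq \|T^* g + \lambda A^* g\| \geq \|T^* g\| = \|T^*\|$ valid for any $\lambda \in \mathbb{R}$. For the $(\Rightarrow)$ direction, I would run the proof of Theorem~\ref{theorem:compact operator} with the pair $(T^*, A^*)$ in the roles of $(T, A)$. Lemma~\ref{lemma:lambda}, which uses no reflexivity, applies to $T^*$ and supplies the dichotomy: either some $g \in M_{T^*}$ with $T^* g \bot_B A^* g$ exists, or there is $\lambda_0 \neq 0$ (say $\lambda_0 > 0$) with $\|T^* g + \lambda_0 A^* g\| < \|T^*\|$ for every $g \in M_{T^*}$. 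Assuming the latter, for each $n$ I would choose $g_n \in S_{\mathbb{Y}^*}$ with $\|(T^* + \frac{1}{n} A^*) g_n\| = \|T^* + \frac{1}{n} A^*\|$, extract a weak-$*$ cluster point $g_0$ of the $g_n$, and argue, exactly as in Theorem~\ref{theorem:compact operator}, that $g_0 \in M_{T^*}$ and $T^* g_0 \bot_B A^* g_0$ via the same convexity computation, contradicting the dichotomy.

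The main obstacle is that Theorem~\ref{theorem:compact operator} is stated under reflexivity of the domain, while $\mathbb{Y}^*$ need not be reflexive. The replacements needed in the adjoint setting are (i) every compact operator whose domain is a dual space attains its norm, and (ii) a compact operator is weak-$*$-to-norm continuous on norm-bounded subsets of a dual space. Both follow from Banach-Alaoglu combined with a standard subnet argument: given any bounded net in $\mathbb{Y}^*$, extract a weak-$*$ convergent subnet, note that a compact operator sends it to a relatively norm-compact net whose only weak-$*$ cluster point is the image of the limit, and conclude norm convergence. These substitutes play the roles of norm-attainment of compact operators on reflexive spaces and extraction of a weakly convergent subsequence, respectively; once they are in place, the remainder of the proof of Theorem~\ref{theorem:compact operator} transfers line by line, completing the argument.
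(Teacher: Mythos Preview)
Your proposal is correct and follows the same route as the paper: pass to adjoints, use $T\bot_B A \Leftrightarrow T^*\bot_B A^*$, and rerun the argument of Theorem~\ref{theorem:compact operator} with Banach--Alaoglu (weak$^*$ compactness of $B_{\mathbb{Y}^*}$) standing in for reflexivity. The paper's own proof is a one-line invocation of Theorem~\ref{theorem:compact operator} together with the remark that $S_{\mathbb{Y}^*}$ is weak$^*$ compact; you are simply more explicit than the paper about why that substitution is legitimate (norm attainment of adjoints of compact operators and weak$^*$-to-norm continuity on bounded sets), which is a genuine improvement in rigor rather than a different approach.
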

\begin{proof} 
Noting that $T \bot_B A $ if and only if  $ T^* \bot_B A^* $ and $S_{\mathbb{Y}^*}$ is weak$^*$ compact we can apply the above Theorem~\ref{theorem:compact operator} to conclude that if $ T \bot_B A $ then there exists $g \in M_{T^*}$ such that $T^*g \bot_B A^*g$. The other part is obvious.
\end{proof}
\begin{remark} Theorem 2.1 of Sain and Paul \cite{SP} is a simple consequence of the above Theorem~\ref{theorem:compact operator}, since every finite dimensional normed  space is reflexive and every linear operator defined there is compact. 
\end{remark}
\noindent The following example shows that the above theorem can not be extended to bounded linear operators without any additonal restriction on $T.$ 
\begin{example}
Consider $T \colon \ell_2 \to \ell_2$ defined by
$Te_1 = -e_1$, and $Te_n = (1 - 1/n)e_n$ for $n \geq 2$,
where $\{e_n \colon n \in \mathbb{N}\}$ is the usual orthonormal basis
for the Hilbert space $\ell_2$.  Then $T$ attains norm only at
$\pm e_1$.  Let $A=I$, the identity operator on $\ell_2$.
It is easy to check that $T \bot_B A$. Indeed,
$\|(T + \lambda A)e_1\| \geq \|T\|$ for all $\lambda \leq 0$, and
$\|(T + \lambda A)e_n\| \geq \|T\|$, for all $\lambda \geq 1/n$.
But $Te_1$ is not orthogonal to $Ae_1$ in the sense of Birkhoff-James.
\end{example}
\noindent In the next theorem we  consider $T$ with an additional condition that {\em sup}$\{\|Tx\|:  x\in C\} < \|T\|$ for all  closed subset $C$ of $S_{\mathbb{X}}$ with $d(M_T, C) > 0.$ 
\begin{theorem}\label{theorem:compact suff}
Let $\mathbb{X}$ be a Banach space,  $T \in B(\mathbb{X},\mathbb{Y}),~M_T = D \cup (-D)$ ($D$ is a non-empty compact connected subset of $S_{\mathbb{X}}$). If {\em sup}$\{\|Tx\|:  x\in C\} < \|T\|$ for all  closed subset $C$ of $S_{\mathbb{X}}$ with $d(M_T, C) > 0$ then for any $ A \in B(\mathbb{X},\mathbb{Y}),~ T\bot_B A, $ if and only if  there exists $z \in M_T$ such that $Tz \bot_B Az$.
\end{theorem}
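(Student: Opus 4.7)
The ``if'' direction is immediate from $\|T+\lambda A\|\ge \|(T+\lambda A)z\|=\|Tz+\lambda Az\|\ge \|Tz\|=\|T\|$, so the work is entirely in the ``only if'' direction. My plan is to argue by contradiction: assume that $T\bot_B A$ but no $z\in M_T$ satisfies $Tz\bot_B Az$, and then construct some small $\lambda>0$ with $\|T+\lambda A\|<\|T\|$. Unlike the compact case (Theorem~\ref{theorem:compact operator}), I cannot rely on norm attainment of $T+\tfrac1n A$; instead I will exploit the additional hypothesis to control $\|Tx+\lambda Ax\|$ uniformly on $S_{\mathbb{X}}$.

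First I would invoke Lemma~\ref{lemma:lambda} to produce a nonzero scalar $\lambda_0$, which I may take positive, such that $\|Tx+\lambda_0 Ax\|<\|T\|$ for every $x\in M_T$. Since $M_T=D\cup(-D)$ is compact and $x\mapsto\|Tx+\lambda_0 Ax\|$ is continuous, this strict inequality can be upgraded to a uniform gap: there exists $\delta>0$ with $\|Tx+\lambda_0 Ax\|\le \|T\|-\delta$ on $M_T$. By continuity again, the open set
\[
U=\{x\in S_{\mathbb{X}}:\|Tx+\lambda_0 Ax\|<\|T\|-\delta/2\}
\]
contains $M_T$. Set $C=S_{\mathbb{X}}\setminus U$; then $C$ is closed in $S_{\mathbb{X}}$ and disjoint from the compact set $M_T$, so $d(M_T,C)>0$ by the standard fact that a closed set and a disjoint compact set in a metric space are at positive distance.

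With $C$ in hand, the hypothesis of the theorem supplies $\beta:=\sup_{x\in C}\|Tx\|<\|T\|$. Now for any $\lambda\in(0,\lambda_0)$ I will split the estimate across $U$ and $C$. On $U$, the convexity trick
\[
Tx+\lambda Ax=\Bigl(1-\tfrac{\lambda}{\lambda_0}\Bigr)Tx+\tfrac{\lambda}{\lambda_0}\bigl(Tx+\lambda_0 Ax\bigr)
\]
combined with $\|Tx\|\le\|T\|$ gives $\|Tx+\lambda Ax\|\le \|T\|-\tfrac{\lambda \delta}{2\lambda_0}$. On $C$, the triangle inequality yields $\|Tx+\lambda Ax\|\le \beta+\lambda\|A\|$.

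The finishing step is simply to choose $\lambda>0$ small enough that $\lambda<\lambda_0$ and $\beta+\lambda\|A\|<\|T\|$; both upper bounds are then strictly less than $\|T\|$, so $\|T+\lambda A\|<\|T\|$, contradicting $T\bot_B A$. The only delicate point I anticipate is the passage from pointwise strict inequality on $M_T$ to the uniform neighborhood $U$ with a quantitative gap; once that is in place, separating $S_{\mathbb{X}}$ into $U$ and $C$ and using the extra norm-attainment hypothesis on $C$ is the essential new ingredient that the compact-operator argument did not need.
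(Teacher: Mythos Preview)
Your argument is correct and follows essentially the same route as the paper: invoke Lemma~\ref{lemma:lambda}, use compactness of $M_T$ to upgrade to a uniform gap, pass to an open neighborhood of $M_T$, apply the hypothesis on the closed complement $C$, and then combine the convexity estimate on the neighborhood with the triangle-inequality estimate on $C$ for small $\lambda$. The only cosmetic difference is that the paper builds the neighborhood as a finite union of balls via a subcover of $M_T$, whereas you take $U$ directly as an open sublevel set of $x\mapsto\|Tx+\lambda_0 Ax\|$; your version is a slight streamlining of the same idea.
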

\begin{proof} 
Assume that $M_T = D \cup (-D)$ ($D$ is a non-empty compact connected subset of $S_{\mathbb{X}}$) and $\sup\{\|Tx\|:  x\in C\} < \|T\|$ for all  closed subset $C$ of $S_{\mathbb{X}}$ with $d(M_T, C) > 0.$ \\
If $z \in M_T$ such that $Tz \bot_B Az$ then clearly $T \bot_B A.$ For the other part, suppose that  $ T \bot_B A $ but there exists no $x \in M_T$ such that $Tx \bot_B Ax$. 
Then by applying  Lemma~\ref{lemma:lambda}, we get some $ \lambda_0 \neq 0 $ such that 
$$\|Tx+\lambda_0 Ax\| < \|T\|~\forall x \in M_T.$$
Without loss of generality we assume that $ \lambda_0 > 0.$ 

Now $ x \longrightarrow \| Tx+\lambda_0 Ax\| $ is a real valued continuous function from $S_{\mathbb{X}}$ to $\mathbb{R}$.  As  $M_T$ is a compact subset of $S_{\mathbb{X}}$ so this function attains its maximum on $M_T$.   Then we can find an $ \epsilon_1 > 0 $ such that 
$$\|Tx+\lambda_0 Ax\| < \|T\| - \epsilon_1 ~\forall x \in M_T.$$ 
Choose $ \epsilon_x = ||T\| - \epsilon_1 - \|Tx+\lambda_0 Ax\|. $ For each $x \in M_T$ we have $\epsilon_x > 0 $ and so by continuity of the function $ T + \lambda_0 A$ at the point $ x $ we can find an open ball $ B(x, r_x) $ such that 
$$ \|(T+\lambda_0 A)(z-x)\| < \epsilon_x \forall z \in B(x, r_x) \cap S_{\mathbb{X}}.$$
Then $\|(T+\lambda_0 A)z\| < \|T \|- \epsilon_1 \forall z \in B(x, r_x) \cap S_{\mathbb{X}}.$

Again let $\lambda \in(0, \lambda_0)$.	Then for all $ z \in B(x,r_x) \cap S_{\mathbb{X}} $
\begin{eqnarray*}
	Tz+\lambda Az &= & (1-\frac{\lambda}{\lambda_0}) Tz + \frac{\lambda}{\lambda_0} (Tz+\lambda_0 Az)\\
	\Rightarrow  \|Tz+\lambda Az\| &\leq & (1-\frac{\lambda}{\lambda_0}) \|Tz\| + \frac{\lambda}{\lambda_0} \|Tz+\lambda_0 Az\|\\
	\Rightarrow  \|Tz+\lambda Az\| & < &  (1-\frac{\lambda}{\lambda_0}) \|T\| + \frac{\lambda}{\lambda_0}(\|T\|-\epsilon_1) \\
	                               & = & \|T\|-\frac{\lambda}{\lambda_0} \epsilon_1
	\end{eqnarray*}
The compactness of $M_T$ ensures that the cover $ \{ B(x,r_x) \cap M_T : x \in M_T\} $ has a finite subcover $\{ B(x_i,r_{x_i}) \cap M_T : i =1,2,\ldots n\} $ so that 
$$ M_T \subset \cup_{i=1}^{n} B(x_i,r_{x_i}).$$
So for $\lambda \in(0, \lambda_0)$ and $ z \in \Big(\cup_{i=1}^{n} B(x_i,r_{x_i}) \Big) \cap S_{\mathbb{X}}$ we get 	
$$ \|Tz+\lambda Az\| < 	\|T\|-\frac{\lambda}{\lambda_0} \epsilon_1.$$
Consider $ C = \cap_{i=1}^{n} B(x_i,r_{x_i}) ^{c}$. Then $C$ is a closed subset of $ S_{\mathbb{X}}$ with $C \cap M_T = \emptyset.$ As $M_T $ is compact so $ d(C,M_T) > 0.$ By the hypothesis {\em sup}$\{\|Tz\|: z \in C\} < \|T\|$ and so there exists $\epsilon_2 > 0$ such that  {\em sup}$\{\|Tz\|:  z\in C\} < \|T\| - \epsilon_2$.

Choose $ 0 < \widetilde{\lambda} < \min\{\lambda_0, \frac{\epsilon_2}{2\|A\|}\}$. Then for all $ z \in C $ we get 
\begin{eqnarray*}
\|Tz+\widetilde{\lambda} Az\| &\leq & \|Tz\|+|\widetilde{\lambda}|\|Az\| \\
                              & <  & \|T\|-\epsilon_2 +|\widetilde{\lambda}|\|A\|\\
															& < & \|T\| -\frac{1}{2}\epsilon_2
\end{eqnarray*}

Choose $\epsilon = \min \{\frac{1}{2}\epsilon_2, \frac{\widetilde{\lambda}}{\lambda_0} \epsilon_1\}$. Then for all $x \in S_{\mathbb{X}} $ we get $$\|Tx+\widetilde{\lambda} Ax\| < \|T\|-\epsilon.$$
This shows that $ \|T+\widetilde{\lambda} A\| < \|T\|$, which contradicts the fact that $ T \bot_B A.$ 
This completes the proof.
\end{proof}

\section{Operator norm attainment in a Hilbert space $\mathbb{H}$ and Birkhoff-James orthogonality in $B(\mathbb{H})$}

From \cite{BS,P} it follows that  if $T $ is a bounded linear operator on a finite dimensional Hilbert space  then $ T \bot_B A$ for  $A \in B(\mathbb{H}) $ if and only if  there exists  $x\in M_T$ such that $ \langle Tx, Ax \rangle = 0.$ The result is not true if the space is of infinite dimension. In the following theorem we  settle the problem for  Hilbert space of any dimension.

\begin{theorem} \label{theorem:Hilbert bounded}Let $T \in B(\mathbb{H}).$  Then  for any $A \in B(\mathbb{H}),~T\bot_B A \Leftrightarrow Tx_0 \bot Ax_0$ for some $x_0 \in M_T$ if and only if  $M_T = S_{H_0},$ where $H_0$ is a finite dimensional subspace of $\mathbb{H}$ and $\|T\|_{{H_0}^\bot} < \|T\|.$ 
\end{theorem}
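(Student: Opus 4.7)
\medskip
\noindent\textbf{Proof plan.} The plan is to prove the two directions of the biconditional separately, leaning on the Bhatia--\v{S}emrl/Paul characterization recalled in the introduction: in a Hilbert space, $T\bot_B A$ if and only if there exists a sequence $(x_n)$ in $S_{\mathbb{H}}$ with $\|Tx_n\|\to\|T\|$ and $\langle Tx_n,Ax_n\rangle\to 0$. I will also exploit the standard Hilbert-space observation that whenever $M_T$ is non-empty it automatically equals $S_{H_0}$ for the closed subspace $H_0=\ker(T^*T-\|T\|^2 I)$; both $H_0$ and $H_0^\perp$ are $T^*T$-invariant, and $\langle Tu,Tv\rangle=\langle T^*Tu,v\rangle=\|T\|^2\langle u,v\rangle=0$ for $u\in H_0$ and $v\in H_0^\perp$, so $T|_{H_0}$ is a scalar isometry with factor $\|T\|$ and $T(H_0)\perp T(H_0^\perp)$.

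For the \emph{sufficiency} direction, assume $M_T=S_{H_0}$ with $\dim H_0<\infty$ and $\|T\|_{H_0^\perp}<\|T\|$. The implication $Tx_0\bot Ax_0\Rightarrow T\bot_B A$ is immediate from the Pythagorean identity $\|Tx_0+\lambda Ax_0\|^2=\|Tx_0\|^2+\lambda^2\|Ax_0\|^2$. For the reverse, given $T\bot_B A$, invoke Paul's criterion to extract a norming sequence $(x_n)$ with $\langle Tx_n,Ax_n\rangle\to 0$, and decompose $x_n=u_n+v_n$ with $u_n\in H_0$, $v_n\in H_0^\perp$. The orthogonality $T(H_0)\perp T(H_0^\perp)$ together with $T|_{H_0}$ being a scalar isometry gives
\[ \|Tx_n\|^2 = \|T\|^2\|u_n\|^2+\|Tv_n\|^2 \leq \|T\|^2-(\|T\|^2-\|T\|_{H_0^\perp}^2)\|v_n\|^2, \]
so $\|Tx_n\|\to\|T\|$ forces $\|v_n\|\to 0$. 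Since $\dim H_0<\infty$, some subsequence of $(u_n)$ converges to $u_0\in S_{H_0}=M_T$; hence $x_n\to u_0$ along that subsequence, and continuity of the inner product yields $\langle Tu_0,Au_0\rangle=0$.

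For the \emph{necessity} direction I would argue by contraposition in three steps. First, $M_T\neq\emptyset$: any nonzero $A$ in the kernel of a norming functional of $T$ (produced by Hahn--Banach on $B(\mathbb{H})$) satisfies $T\bot_B A$, and the hypothesis then forces a matching $x_0\in M_T$. The spectral observation above then gives $M_T=S_{H_0}$ for some closed subspace $H_0$, and it remains to prove $\dim H_0<\infty$ and $\|T\|_{H_0^\perp}<\|T\|$. For the dimension, if $\{e_n\}$ is an orthonormal sequence in $H_0$, set $Ae_n=\tfrac{1}{n}Te_n$ and $A=0$ on $H_0^\perp$; then $\|(T+\lambda A)e_n\|=|1+\lambda/n|\,\|T\|\to\|T\|$ yields $T\bot_B A$, while $\langle Tx,Ax\rangle=\|T\|^2\sum_n a_n^2/n>0$ for every $x=\sum_n a_n e_n\in S_{H_0}$, contradicting the hypothesis. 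For the norm inequality, if $\|T\|_{H_0^\perp}=\|T\|$, pick a norming sequence $(w_n)\subset S_{H_0^\perp}$ and set $A=TP_{H_0}$; since $Aw_n=0$, one has $(T+\lambda A)w_n=Tw_n$ for every $\lambda$, so $T\bot_B A$, yet $\langle Tx,Ax\rangle=\|Tx\|^2=\|T\|^2$ for every $x\in S_{H_0}$, again a contradiction. The main obstacle throughout is the delicate design of these witness operators $A$: each must be calibrated so that $T\bot_B A$ is forced via Paul's criterion on a near-extremal sequence outside $M_T$, while the quadratic form $x\mapsto\langle Tx,Ax\rangle$ remains strictly nonzero on every point of $M_T$; the spectral separation $T(H_0)\perp T(H_0^\perp)$ is the tool that makes these competing demands compatible.
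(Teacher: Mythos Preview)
Your argument follows essentially the same route as the paper: sufficiency via Paul's sequential criterion together with the orthogonal decomposition into $H_0$ and $H_0^\perp$ components and compactness in the finite-dimensional piece; necessity by exhibiting witness operators $A$ in three steps. Your sufficiency argument is in fact cleaner than the paper's (you correctly conclude $\|v_n\|\to 0$, whereas the paper overstates this as $y_n=0$ for all $n$), and your explicit Hahn--Banach step for $M_T\neq\emptyset$ fills a small lacuna the paper leaves implicit.

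There is, however, one genuine gap in your finite-dimensionality step. You set $Ae_n=\tfrac{1}{n}Te_n$ for an orthonormal sequence $(e_n)\subset H_0$ and $A=0$ on $H_0^\perp$, but you never say what $A$ does on $H_0\ominus\overline{\mathrm{span}}\{e_n\}$; when $H_0$ is non-separable (or merely properly contains the closed span of the $e_n$) this residual piece is non-trivial. If one extends $A$ by zero there, then any unit vector $x$ in that complement satisfies $Ax=0$, hence $\langle Tx,Ax\rangle=0$, so the hypothesis ``$T\bot_B A\Rightarrow Tx_0\bot Ax_0$ for some $x_0\in M_T$'' is \emph{satisfied} rather than violated, and no contradiction arises. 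Your line ``$\langle Tx,Ax\rangle=\|T\|^2\sum_n a_n^2/n>0$ for every $x=\sum_n a_n e_n\in S_{H_0}$'' tacitly assumes every element of $S_{H_0}$ lies in $\overline{\mathrm{span}}\{e_n\}$. The repair is exactly what the paper does: extend $(e_n)$ to an orthonormal basis $(e_\alpha)_{\alpha\in\Lambda}$ of $H_0$ and set $Ae_\alpha=Te_\alpha$ for $\alpha\in\Lambda\setminus\mathbb{N}$; then for $x\in S_{H_0}$ one obtains
\[
\langle Tx,Ax\rangle=\|T\|^2\Bigl(\sum_{n\in\mathbb{N}}\tfrac{1}{n}\,|\langle x,e_n\rangle|^2+\sum_{\alpha\in\Lambda\setminus\mathbb{N}}|\langle x,e_\alpha\rangle|^2\Bigr),
\]
which vanishes only for $x=0$.
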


\begin{proof} Without loss of generality we assume that $ \|T\|= 1.$ We first prove the necessary part. From Theorem 2.2 of Sain and Paul \cite{SP} it follows that in case of a Hilbert space the norm attaining set $M_T$ is always a unit sphere of some subspace of the space. We  first show that the subspace is finite dimensional. Suppose $M_T$ be the unit sphere of an infinite dimensional subspace $H_0.$ Then we can find a set $\{e_n : n \in \mathbb{N} \}$ of orthonormal   vectors in $H_0.$  Extend the set to a complete orthonormal basis $\mathcal{B} = \{ e_{\alpha} : \alpha \in \Lambda \supset \mathbb{N} \} $ of $\mathbb{H}.$
For each $e_\alpha \in H_0 \cap \mathcal{B} $ we have 
$$ \| T^{*}T \| = \|T\|^2 = \|Te_\alpha \|^2 = \langle T^{*}T e_\alpha , e_\alpha \rangle \leq \|T^{*}Te_\alpha \| \|e_\alpha \| \leq \|T^{*}T\| $$
so that by the equality condition of Schwarz's inequality we get $ T^{*}Te_\alpha = \lambda_\alpha e_\alpha $ for some scalar $\lambda_\alpha.$ 
Thus $\{ Te_\alpha : e_\alpha \in H_0 \cap \mathcal{B}\}$ is a set of orthonormal vectors in $\mathbb{H}.$ 
 Define $A : \mathcal{B} \longrightarrow \mathbb{H} $ as follows : 
\begin{eqnarray*}
           A(e_n) & =   & \frac{1}{n^2} Te_n, n \in \mathbb{N} \\
         	 A(e_\alpha) & = & Te_\alpha,~ e_\alpha \in  H_0 \cap \mathcal{B} - \{e_n : n \in \mathbb{N} \}\\
					 A(e_\alpha) & = & 0,~e_\alpha \in \mathcal{B} - H_0 \cap \mathcal{B}
\end{eqnarray*}	

As $\{ Te_\alpha : e_\alpha \in H_0 \cap \mathcal{B}\}$ is a set of orthonormal vectors in $\mathbb{H}$ it is easy to see that  $A$ can be extended as  a bounded linear operator on $\mathbb{H}.$

Now for  any scalar $\lambda , ~ \| T + \lambda A \| \geq \| (T + \lambda A)e_n \| = \| ( 1 + \frac{\lambda}{n^2})Te_n \| = \mid 1 +  \frac{\lambda}{n^2} \mid \|T\| \longrightarrow \|T\|.$ Thus $ T \bot_{B} A.$ 

We next show that there exists no $ x \in M_T$ such that $ Tx \bot_B Ax.$ 
Let $ x = \sum_{\alpha} \langle x, e_\alpha \rangle e_\alpha~ \in M_T.$  Then 
$$ \langle Tx, Ax \rangle = \sum_n \frac{1}{n^2} \mid \langle x, e_n \rangle\mid ^2	\|T\|^2+ \sum_{\alpha \notin \mathbb{N}} \mid \langle x, e_\alpha \rangle\mid ^2 	\|T\|^2 $$
and so $ \langle Tx, Ax \rangle =  0 $ if and only if  $x = 0.$
Thus $T \bot_B A$ but there exists no $x \in M_T$ such that $ Tx \bot_B Ax.$ This is a contradiction and so $H_0$ must be finite dimensional.

We next claim that $\|T\|_{{H_0}^{\bot}} < \|T\|.$  Suppose $\|T\|_{{H_0}^{\bot}} = \|T\|.$  As $T$ does not attain its norm on ${H_0}^{\bot}$ and  $ \|T\| $= {\em sup} $\{ \|Tx\| : x \in S_{{H_0}^{\bot}} \} $  	there exists $\{e_n\} $ in ${H_0}^{\bot}$ such that $\|Te_n\| \longrightarrow \|T\|.$  We have $ \mathbb{H} = H_0 \oplus {H_0}^{\bot} $.

Define $A : \mathbb{H} \longrightarrow \mathbb{H} $ as follows:
\[ Az = Tx, \mbox{ where } z = x + y, x \in H_0, y \in {H_0}^{\bot} \]
 Then it is easy to check $A$ is bounded on $\mathbb{H}$. Also for any scalar $ \lambda $, $ \| T + \lambda A \| \geq \| (T + \lambda A) e_n \|  = \| Te_n \| $ holds for each $ n \in N.$ Then $ \| T + \lambda A \| \geq \|T\| $ for all $ \lambda $ so that $ T \bot_B A.$ 
But there exists no $ x\in M_T$ such that $\langle Tx, Ax\rangle = 0.$ This contradiction completes the necessary part of the theorem.

We next prove the sufficient part. If $ \langle  Tx_0 , Ax_0 \rangle = 0 $ for some $x_0 \in M_T,$ then $ T \bot_B A.$ Next let $T\bot_B A .$ Then by  Paul  \cite{P}   there exists  $ \{z_n\}\subset S_\mathbb{H} $ such that $\|Tz_n\|\rightarrow \|T\|$ and $\langle Tz_n,Az_n \rangle \longrightarrow 0$.
For each $n \in \mathbb{N} $ we have $z_n =  x_n + y_n,$  where $x_n \in H_0,  y_n~\in {H_0}^{\bot}$. 
Then $ \|z_n\|^2 = 1 = \|x_n\|^2 + \|y_n\|^2 $ and so $\|x_n\| \leq 1, ~\forall n \in \mathbb{N}.$ 
As $H_0$ is a finite dimensional subspace so $\{x_n\},$ being bounded, has a convergent subsequence converging to some element of $H_0.$ Without loss of generality we assume that $ x_n \longrightarrow x_0 $ (say) in $H_0$ in norm. Now for each non-zero element $x \in H_0$ we have,
 $$ \|T^*T\|\|x\|^2 \leq \|T\|^2 \|x\|^2 = \| Tx\|^2 = \langle T^*Tx,x \rangle \leq \|T^*Tx\| \|x\| \leq \|T^*T\|\|x\|^2 $$ and so 
$ \langle T^*Tx,x \rangle = \|T^*Tx\| \|x\|.$ By the equality condition of Schwarz's inequality  $T^*Tx = \lambda_x x$ for some $\lambda_x.$ 

Now $ \langle T^*Tx_n,y_n \rangle =  \langle T^*Ty_n,x_n \rangle = 0 $ and so 
\begin{eqnarray*}
\langle T^*Tz_n, z_n \rangle & = &  \langle T^*Tx_n,x_n \rangle  + \langle T^*Tx_n,y_n \rangle + \langle T^*Ty_n,x_n \rangle  +  \langle T^*Ty_n, y_n \rangle  \\
\Rightarrow \lim \|Tz_n\|^2 &  = &  \lim \|Tx_n\|^2 + \lim \|Ty_n\|^2 .\\
\Rightarrow \|T\|^2 &  = &  \|Tx_0\|^2 + \lim \|Ty_n\|^2 \\
\Rightarrow  \lim\|Ty_n\|^2 & = &  \|T\|^{2} (1- \|x_0\|^2) \\
\Rightarrow \lim\|Ty_n\|^2 & = & \|T\|^{2} \lim \|y_n\|^2 \ldots\ldots(1)
\end{eqnarray*}
By hypothesis  {\em sup}$\{\|Ty\|:  y \in {H_0}^{\bot},~\|y\|=1\} < \|T\| $ and so by (1) there does not exist any non-zero subsequence of $\{\|y_n\|\}.$ So we conclude $y_n = 0  \forall~ n $ and $ z_n = x_n  \forall ~n$.
Then $ \langle Tz_n, Az_n \rangle \rightarrow 0 \Rightarrow  \langle Tx_0,Ax_0 \rangle = 0 $. This completes the proof.  
 
\end{proof}

\section{Smoothness of bounded linear operators}

\noindent As an application of the results obtained in the previous sections we first give sufficient condition for smoothness of compact  linear operators on a Banach space. Later on we  give sufficient condition for   smoothness of bounded linear operators on a Banach space. 
\begin{theorem}\label{theorem:smooth compact sufficient} Let $\mathbb{X}$ be a reflexive Banach space and $\mathbb{Y}$ be a normed  space. Then $T \in K(\mathbb{X},\mathbb{Y})$ is smooth if $T$ attains norm at a unique (upto scalar multiplication) vector $x_0$ (say)  of $S_{\mathbb{X}}$ and $ Tx_0$ is a smooth point.
\end{theorem}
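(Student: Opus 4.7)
The plan is to deduce smoothness of $T$ from the orthogonality characterization in Theorem~\ref{theorem:compact operator}, combined with the hypothesis that $Tx_0$ is smooth in $\mathbb{Y}$. Recall that $T$ is smooth in $K(\mathbb{X},\mathbb{Y})$ if and only if Birkhoff--James orthogonality is right additive at $T$; thus it suffices to show that $T\bot_B A_1$ and $T\bot_B A_2$ imply $T\bot_B (A_1+A_2)$ for arbitrary $A_1, A_2 \in K(\mathbb{X},\mathbb{Y})$.

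The first step is to observe that the hypothesis of unique norm attainment (up to scalar multiplication) forces $M_T = \{x_0,-x_0\}$, i.e.\ $M_T = D \cup (-D)$ with $D = \{x_0\}$ a trivially compact connected subset of $S_{\mathbb{X}}$. Hence Theorem~\ref{theorem:compact operator} applies, and for every $A \in K(\mathbb{X},\mathbb{Y})$ yields
\[
T\bot_B A \ \Longleftrightarrow \ Tx_0\bot_B Ax_0,
\]
the equivalence at $-x_0$ reducing to the one at $x_0$ by homogeneity of the norm.

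Now assume $T\bot_B A_1$ and $T\bot_B A_2$ with $A_1,A_2\in K(\mathbb{X},\mathbb{Y})$. The displayed equivalence gives $Tx_0\bot_B A_1x_0$ and $Tx_0\bot_B A_2x_0$ in $\mathbb{Y}$, and since $Tx_0$ is a smooth point of $\mathbb{Y}$, right additivity of $\bot_B$ at $Tx_0$ produces $Tx_0 \bot_B (A_1x_0 + A_2x_0) = (A_1+A_2)x_0$. As $A_1+A_2 \in K(\mathbb{X},\mathbb{Y})$, the easy direction (that $Tx \bot_B Ax$ for some $x\in M_T$ implies $T\bot_B A$, valid in any normed space) then yields $T\bot_B (A_1+A_2)$. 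The argument is essentially a translation through the orthogonality dictionary supplied by Theorem~\ref{theorem:compact operator}; the only mild obstacle is bookkeeping, namely verifying that the singleton $D=\{x_0\}$ meets the hypotheses of that theorem and that $A_1+A_2$ remains compact so that the equivalence can be used in the final step. Neither point presents any genuine difficulty, so the proof reduces to the clean reduction sketched above.
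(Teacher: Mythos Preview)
Your proof is correct and follows essentially the same approach as the paper: both reduce smoothness to right additivity of $\bot_B$ at $T$, invoke Theorem~\ref{theorem:compact operator} (with $M_T=\{\pm x_0\}$) to transfer $T\bot_B A_i$ to $Tx_0\bot_B A_ix_0$, use smoothness of $Tx_0$ to combine these, and then pass back to $T\bot_B(A_1+A_2)$. Your write-up is simply more explicit about verifying that the singleton $D=\{x_0\}$ satisfies the hypotheses of Theorem~\ref{theorem:compact operator} and that $A_1+A_2$ is compact.
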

\begin{proof} Assume $T$ attains norm at a unique (upto scalar multiplication) vector $x_0$ (say)  of $S_{\mathbb{X}}$ and $ Tx_0$ is a smooth point. 
We show that for any $P, Q \in K(\mathbb{X},\mathbb{Y}),$  if $ T \bot_B P $ and $ T \bot_B Q$ then $ T \bot_B ~(P + Q).$  
By Theorem~\ref{theorem:compact operator}, we get $ Tx_0 \bot_B Px_0 $ and $ Tx_0 \bot_B Qx_0.$ As $Tx_0$ is a smooth point so we get $ Tx_0 \bot_B~( Px_0 + Qx_0). $ Then $ T \bot_B ~(P + Q).$\\
This completes the proof.
\end{proof}
\begin{remark} This improves  the result [Theorem 2.2 ] proved by Hennefeld  \cite{HF} in which the author assumed $\mathbb{X}$ to be a smooth reflexive Banach space with a Schauder basis. 
\end{remark}
Conversely we show that the conditions are necessary.
\begin{theorem}\label{theorem:smooth compact necessary} Let $\mathbb{X}$ be a reflexive Banach space and $\mathbb{Y}$ be a normed  space. If $T \in K(\mathbb{X},\mathbb{Y})$ is smooth then $T$ attains norm at a unique (upto scalar multiplication) vector $x_0$ (say)  of $S_{\mathbb{X}}$ and $ Tx_0$ is a smooth point.
\end{theorem}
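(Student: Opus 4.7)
The plan is to exploit the standard dual characterization of smoothness: $T \in K(\mathbb{X},\mathbb{Y})$ is smooth if and only if there is a unique $F \in S_{K(\mathbb{X},\mathbb{Y})^*}$ with $F(T) = \|T\|$. As a preliminary, I note that $M_T$ is non-empty because $\mathbb{X}$ is reflexive and $T$ is compact: any norming sequence $\{x_n\} \subset S_\mathbb{X}$ has a weakly convergent subsequence $x_{n_k} \rightharpoonup x_0$ with $\|x_0\| \leq 1$, and then compactness yields $Tx_{n_k} \to Tx_0$ in norm, forcing $\|Tx_0\| = \|T\|$ and hence $x_0 \in M_T$.

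The central construction is as follows. For each $x \in M_T$ and each $g \in S_{\mathbb{Y}^*}$ supporting $Tx$ (i.e., $g(Tx) = \|T\|$, which exists by Hahn-Banach), define $F_{x,g} \in K(\mathbb{X},\mathbb{Y})^*$ by $F_{x,g}(A) = g(Ax)$. Clearly $\|F_{x,g}\| \leq 1$ and $F_{x,g}(T) = \|T\|$, so $F_{x,g}$ is a norm-one functional attaining its norm at $T$. The smoothness hypothesis on $T$ forces $F_{x,g}$ to be the same functional for all admissible choices of $x$ and $g$. Now, if $M_T$ were to contain $x_1, x_2$ that are not scalar multiples, then $x_1, x_2$ would be linearly independent, so Hahn-Banach supplies $f \in \mathbb{X}^*$ with $f(x_1)=1$ and $f(x_2)=0$. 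Fixing corresponding supporting functionals $g_1, g_2$ and plugging the rank-one compact operator $A = f \otimes y$ (defined by $A(u) = f(u)\,y$) into the identity $F_{x_1,g_1}(A) = F_{x_2,g_2}(A)$ yields $g_1(y) = 0$ for every $y \in \mathbb{Y}$, contradicting $\|g_1\|=1$. Since $-x_0 \in M_T$ automatically, this gives $M_T = \{\pm x_0\}$.

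For the second assertion, suppose $g_1, g_2 \in S_{\mathbb{Y}^*}$ both satisfy $g_i(Tx_0) = \|Tx_0\|$. Applying the uniqueness of the norm-attaining functional in $K(\mathbb{X},\mathbb{Y})^*$ to $F_{x_0,g_1}$ and $F_{x_0,g_2}$ gives $g_1(Ax_0) = g_2(Ax_0)$ for every compact $A$. Choosing $f \in \mathbb{X}^*$ with $f(x_0)=1$ and testing against $A = f \otimes y$ yields $g_1(y) = g_2(y)$ for all $y \in \mathbb{Y}$, so $g_1 = g_2$. By the Hahn-Banach/James characterization of smoothness in $\mathbb{Y}$, $Tx_0$ is a smooth point.

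The main (quite mild) obstacle is simply ensuring that $M_T \neq \emptyset$ and that the rank-one operators $f \otimes y$ genuinely lie in $K(\mathbb{X},\mathbb{Y})$ so that they are legal test vectors for the functionals $F_{x,g}$; both facts are routine once reflexivity of $\mathbb{X}$ and compactness of rank-one maps are invoked. Notably, the orthogonality machinery of Theorem~\ref{theorem:compact operator} is not needed here, as this direction reduces cleanly to a dualisation of the smoothness condition.
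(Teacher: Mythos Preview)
Your argument is correct, but it follows a genuinely different route from the paper's own proof. The paper works entirely through the Birkhoff--James characterization of smoothness (right-additivity of $\bot_B$): assuming two distinct norm-attaining vectors $x_1,x_2$, it builds a direct decomposition $\mathbb{X}=\langle x_1\rangle\oplus\langle ax_1+x_2\rangle\oplus H_2$ and defines compact operators $A_1,A_2$ with $T=A_1+A_2$, $T\bot_B A_1$, $T\bot_B A_2$, contradicting smoothness; for the smoothness of $Tx_0$ it similarly constructs two rank-one operators witnessing failure of right-additivity, and crucially invokes Theorem~\ref{theorem:compact operator} to pass from $T\bot_B(A_1+A_2)$ back down to $Tx_0\bot_B(y+z)$. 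Your approach instead dualises immediately: you exhibit the explicit norming functionals $F_{x,g}(A)=g(Ax)$ and separate them using rank-one test operators $f\otimes y$. This is cleaner and entirely self-contained --- in particular you are right that Theorem~\ref{theorem:compact operator} is not needed --- and it is closer in spirit to Heinrich's original differentiability argument that the paper cites. The paper's route, by contrast, keeps the whole development inside the $\bot_B$ framework and showcases the orthogonality theorem as a working tool, which is the paper's thematic point even if it is technically a detour for this particular direction.
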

\begin{proof} Since the space $\mathbb{X}$ is reflexive and $T$ is compact so there exists $x \in S_{\mathbb{X}} $ such that $\|Tx\| = \|T\|.$ We show that if $ \| Tx_1 \| = \|Tx_2 \| = \|T\|$ for $ x_1, x_2 \in S_{\mathbb{X}} $ then $ x_1 = \pm x_2.$ If possible let $ x_1 \neq \pm x_2.$ There exists a subspace $ H_1 $ of codimension 1 such that $ x_1 \bot_{B} H_1.$ There exists a scalar $a$ with $\mid a \mid \leq 1 $ such that $ax_1 + x_2 \in H_1.$ Again there exists a subspace $H_2$ of $H_1$ with codimension 1 in $H_1$ such that $ (ax_1 +x_2) \bot_{B} H_2.$ Now every element $ z \in S_{\mathbb{X}} $ can be written uniquely as $ z = \alpha x_1 + h_1 $ for some scalar $\alpha $ and $ h_1 \in H_1.$ Again $h_1$ can be written uniquely as $h_1 = \beta(ax_1 + x_2) + h_2$ for some scalar $\beta $ and $h_2 \in H_2.$  Thus $ z = ( \alpha + a \beta ) x_1 + \beta x_2 + h_2 .$ Define operators $ A_1, A_2 : \mathbb{X} \longrightarrow \mathbb{Y} $ as follows : \\
\[ A_1(z) = (\alpha + a \beta) Tx_1 , ~A_2(z) = \beta T x_2 +  T h_2.\]
Clearly both $A_1,A_2 $ are compact linear operators. Then $ T \bot_{B} A_1, T \bot_{B}A_2 $ but $ T = A_1 + A_2 $ which shows that $ T $ is not orthogonal to $ A_1 + A_2$ in the sense of Birkhoff-James. This shows that $T$ is not smooth. Hence $T$ attains norm at unique( upto scalar multiplication )  vector $x_0 \in S_{\mathbb{X}} $. \\

We next show that $Tx_0$ is a smooth point in $\mathbb{Y}.$ If possible let $Tx_0$ be not smooth. Then there exists $y,z \in \mathbb{Y} $ such that $ Tx_0 \bot_{B} y, Tx_0 \bot_{B} z $ but $ Tx_0$ is not orthogonal to $ y +z$ in the sense of Birkhoff-James.  There exists a hyperplane $H$ such that $ x_0 \bot_{B} H.$ Define two operators $A_1, A_2 : \mathbb{X} \longrightarrow \mathbb{Y} $ as follows : 
\[ A_1( ax_0 + h) = a y, ~ A_2(ax_0 + h) = az. \]
Then it is easy to check that both $A_1, A_2$ are compact linear operators and $ T \bot_{B} A_1,$ $  T \bot_{B} A_2. $ 
But $T$ is not orthogonal to $A_1 + A_2$, otherwise since $M_T = \{ \pm x_0\} ,$ we have by 
Theorem~\ref{theorem:compact operator}, $ Tx_0 \bot_{B} (y + z) $, which is not possible. This contradiction shows that $Tx_0$ is a smooth point. 

\end{proof}

\begin{corollary}\label{corollary:compact**}
 $T \in K(\mathbb{X},\mathbb{Y}) $ is a smooth point if and only if  $T^{*}$ attains norm at a unique (upto scalar multiplication) vector $g $ (say) of $S_{\mathbb{Y}^*}$ and $T^*g$ is a smooth point. 
\end{corollary}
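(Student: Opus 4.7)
The plan is to deduce the corollary from the compact-operator characterizations (Theorems~\ref{theorem:smooth compact sufficient} and~\ref{theorem:smooth compact necessary}) together with Corollary~\ref{corollary:compact*}, exploiting the duality $T \bot_B A \iff T^* \bot_B A^*$ and Schauder's theorem ($T \in K(\mathbb{X},\mathbb{Y})$ iff $T^* \in K(\mathbb{Y}^*,\mathbb{X}^*)$), so that Corollary~\ref{corollary:compact*} can be applied to every orthogonality relation involved.

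For the sufficiency direction, assume $M_{T^*} = \{\pm g\}$ and $T^*g$ is smooth in $\mathbb{X}^*$. Let $P, Q \in K(\mathbb{X},\mathbb{Y})$ with $T \bot_B P$ and $T \bot_B Q$. Corollary~\ref{corollary:compact*} supplies $g_1, g_2 \in M_{T^*}$ with $T^*g_1 \bot_B P^*g_1$ and $T^*g_2 \bot_B Q^*g_2$; the uniqueness hypothesis forces $g_1 = \pm g = g_2$, so both orthogonalities hold at the single point $g$. Smoothness of $T^*g$ then yields $T^*g \bot_B (P^* + Q^*)g = (P+Q)^*g$, hence $T^* \bot_B (P+Q)^*$, and dualizing again gives $T \bot_B (P+Q)$. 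Thus right-additivity of $\bot_B$ at $T$ holds and $T$ is smooth.

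For the necessity direction, suppose $T$ is smooth. I would transpose the proof of Theorem~\ref{theorem:smooth compact necessary} to the dual side. Since $T^*$ is compact and $B_{\mathbb{Y}^*}$ is weak$^*$-compact, $T^*$ attains its norm at some $g \in S_{\mathbb{Y}^*}$. If $T^*$ were to attain its norm at two linearly independent unit vectors $g_1, g_2$, the hyperplane-decomposition construction from the first half of Theorem~\ref{theorem:smooth compact necessary}, applied in $\mathbb{Y}^*$ instead of $\mathbb{X}$, would produce rank-one pieces on $\mathbb{Y}^*$ of the form $y^* \mapsto y^*(y_0)\,f$ with $y_0 \in \mathbb{Y}$ and $f \in \mathbb{X}^*$; each such piece is automatically the adjoint of the rank-one operator $x \mapsto f(x)\,y_0$ in $K(\mathbb{X},\mathbb{Y})$, so assembling them yields compact $A_1, A_2 \in K(\mathbb{X},\mathbb{Y})$ with $T \bot_B A_i$ and $T = A_1 + A_2$, contradicting smoothness of $T$. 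An analogous rank-one construction disposes of the smoothness claim for $T^*g$: witnesses $u, v \in \mathbb{X}^*$ of a failure of smoothness at $T^*g$, together with the (now unique) norming direction $g$, produce rank-one compact operators in $K(\mathbb{X},\mathbb{Y})$ whose adjoints violate right-additivity of $\bot_B$ at $T^*$, and therefore at $T$ via Corollary~\ref{corollary:compact*}.

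The main obstacle is the adjoint-versus-general-operator issue: the arguments of Theorem~\ref{theorem:smooth compact necessary} naturally take place in $K(\mathbb{X},\mathbb{Y})$, but the mirror arguments here live in $K(\mathbb{Y}^*,\mathbb{X}^*)$, where a general compact operator need not be an adjoint (equivalently, weak$^*$-to-weak$^*$ continuous). Restricting the construction to rank-one pieces of the form $y^* \mapsto y^*(y_0)\,f$ bypasses the difficulty, since each such piece is manifestly $A^*$ for a rank-one $A \in K(\mathbb{X},\mathbb{Y})$, and the constructions in Theorem~\ref{theorem:smooth compact necessary} are built entirely out of such rank-one summands.
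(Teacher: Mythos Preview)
Your approach is essentially the paper's: both rerun the arguments of Theorems~\ref{theorem:smooth compact sufficient} and~\ref{theorem:smooth compact necessary} on the adjoint side via Corollary~\ref{corollary:compact*}. The paper compresses this to two sentences, opening with ``$T$ is smooth if and only if $T^{*}$ is smooth'' and then invoking ``the same method as above,'' whereas you work directly with right-additivity of $\bot_B$ at $T$ in $K(\mathbb{X},\mathbb{Y})$; this is arguably cleaner since it sidesteps the question of in which ambient space $T^{*}$ is meant to be smooth.

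One caution about your last paragraph. The coefficient functionals in the hyperplane construction of Theorem~\ref{theorem:smooth compact necessary} come from Birkhoff--James orthogonal complements, and when that construction is transposed verbatim to $\mathbb{Y}^{*}$ those functionals land in $\mathbb{Y}^{**}$ with no a~priori reason to lie in $\mathbb{Y}$; so the assertion that the construction is already ``built entirely out of such rank-one summands'' (in the weak$^{*}$-continuous sense you need) is not automatic. The fix is exactly what your phrasing suggests but does not quite carry out: you must \emph{choose} the rank-one piece to be weak$^{*}$-continuous rather than inherit it from the transposed decomposition. For the uniqueness step take $B_1(y^{*})=y^{*}(y_0)\,T^{*}g_1$ with any $y_0\in\mathbb{Y}$ satisfying $g_1(y_0)=1$ and $g_2(y_0)=0$ (possible since $g_1,g_2$ are linearly independent), and set $B_2=T^{*}-B_1$; for the smoothness-of-$T^{*}g$ step take $B_i(y^{*})=y^{*}(y_0)\,u_i$ with $g(y_0)=1$. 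These are manifestly adjoints, are bounded because evaluation at $y_0$ is, and still yield $T^{*}\bot_B B_i$ together with the required failure of orthogonality to the sum (the latter via Corollary~\ref{corollary:compact*}, as you note). With this adjustment your argument goes through.
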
 
\begin{proof} We first note that $T$ is smooth if and only if  $T^{*}$ is smooth. 
Then by using Corollary~\ref{corollary:compact*} and following the same method as above we can show $T$ is  a smooth point if and only if   $T^{*} $ attains norm at a unique (upto scalar multiplication) vector $g $ (say) of $S_{\mathbb{Y}^*}$ and $T^*g$ is a smooth point. 
\end{proof}
\begin{remark}
In \cite{HR} Heinrich proved  necessary and sufficient conditions for smoothness of compact operators from a Banach space $ \mathbb{X} $ to a Banach space $ \mathbb{Y} $ using differentiabilty of the norm of a Banach space. In Theorem~\ref{theorem:smooth compact sufficient}, Theorem~\ref{theorem:smooth compact necessary} and Corollary~\ref{corollary:compact**}, we have given  alternative proofs of the   results without assuming $\mathbb{Y}$ to be a Banach space.
\end{remark}
\noindent We next give a sufficient condition for a bounded linear operator to be smooth.
\begin{theorem}
Let $\mathbb{X}$ be a Banach space and $\mathbb{Y}$ be a normed  space.  Then $T \in B(\mathbb{X},\mathbb{Y})$ is a smooth point if $T$ attains norm only at $\pm x_0$, $Tx_0$ is smooth and {\em sup}$\{\|Tx\|:  x\in C\} < \|T\|$ for all closed subsets $C$ of $ S_\mathbb{X}$ with $d( \pm x_0, C) > 0$.
\end{theorem}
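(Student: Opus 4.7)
The plan is to follow the same strategy used in Theorem~\ref{theorem:smooth compact sufficient}, but to replace every invocation of the compact-operator orthogonality result (Theorem~\ref{theorem:compact operator}) with its bounded-operator analogue (Theorem~\ref{theorem:compact suff}). Concretely, I will verify that, under the stated hypotheses on $T$, Birkhoff--James orthogonality is right additive at $T$ in $B(\mathbb{X},\mathbb{Y})$; by the characterization of smoothness via right additivity of $\bot_B$, this forces $T$ to be a smooth point.

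The key observation is that the set $M_T = \{\pm x_0\}$ can be written in the form $D \cup (-D)$ with $D = \{x_0\}$, which is a non-empty compact connected subset of $S_{\mathbb{X}}$. Moreover, since $M_T = \{\pm x_0\}$, the condition $d(M_T,C)>0$ appearing in the hypothesis of Theorem~\ref{theorem:compact suff} is exactly the condition $d(\pm x_0, C)>0$ assumed here. Hence Theorem~\ref{theorem:compact suff} applies to $T$. Now, take arbitrary $P,Q \in B(\mathbb{X},\mathbb{Y})$ with $T \bot_B P$ and $T \bot_B Q$. By Theorem~\ref{theorem:compact suff}, there exist $z_1, z_2 \in M_T = \{\pm x_0\}$ with $Tz_1 \bot_B Pz_1$ and $Tz_2 \bot_B Qz_2$; since $z_i = \pm x_0$ and Birkhoff--James orthogonality is homogeneous, these reduce to $Tx_0 \bot_B Px_0$ and $Tx_0 \bot_B Qx_0$.

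From here, smoothness of $Tx_0$ in $\mathbb{Y}$ means that $\bot_B$ is right additive at $Tx_0$, so $Tx_0 \bot_B (Px_0 + Qx_0) = (P+Q)x_0$. Hence, for every $\lambda \in \mathbb{R}$,
\[ \|T + \lambda (P+Q)\| \ge \|(T + \lambda (P+Q))x_0\| = \|Tx_0 + \lambda (P+Q)x_0\| \ge \|Tx_0\| = \|T\|, \]
showing $T \bot_B (P+Q)$. Thus $\bot_B$ is right additive at $T$, and $T$ is a smooth point of $B(\mathbb{X},\mathbb{Y})$.

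There is no deep obstacle: all the substantive work has been absorbed into Theorem~\ref{theorem:compact suff}. The only item requiring care is the routine bookkeeping of verifying that the hypotheses of that theorem are met with $D = \{x_0\}$ and that the sup-condition phrased in terms of $\pm x_0$ coincides with the one phrased in terms of $M_T$. The argument above is valid verbatim even when $P$ and $Q$ are merely bounded (not compact), which is precisely the improvement over Theorem~\ref{theorem:smooth compact sufficient}.
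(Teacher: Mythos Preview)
Your proof is correct and follows essentially the same approach as the paper: verify that $M_T=\{\pm x_0\}$ satisfies the hypotheses of Theorem~\ref{theorem:compact suff}, apply that theorem to pass from $T\bot_B P$ and $T\bot_B Q$ to $Tx_0\bot_B Px_0$ and $Tx_0\bot_B Qx_0$, then use smoothness of $Tx_0$ to conclude $Tx_0\bot_B (P+Q)x_0$ and hence $T\bot_B (P+Q)$. Your write-up is in fact more explicit than the paper's in checking the bookkeeping that $D=\{x_0\}$ is compact connected and that the sup-condition matches.
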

\begin{proof} 
 Suppose $T$ attains norm only at $\pm x_0$, $Tx_0$ is smooth and {\em sup}$ \{\|Tx\|:  x\in C\} < \|T\|$ for all closed subsets $C$ of $ S_\mathbb{X}$ with $d(\pm x_0, C) > 0$.  
Let $ T \bot_B A_1,~ T \bot_B A_2 .$ Then by Theorem~\ref{theorem:compact suff}, $Tx_0 \bot_B A_1 x_0,~Tx_0 \bot_B A_2x_0.$ As $Tx_0$ is a smooth point so $Tx_0 \bot_B (A_1 + A_2)x_0 $ and so $ T \bot_B (A_1 + A_2).$ Thus $T$ is a smooth point. 

\end{proof}

We next prove the following: 
\begin{theorem}\label{theorem:bounded complmnt attain}
Let $\mathbb{X}$ be a Banach space and $\mathbb{Y}$ be a normed  space.  If $T \in B(\mathbb{X},\mathbb{Y})$ is a smooth point that attains norm only at $ \pm x_0 \in S_{\mathbb{X}} $ then {\em sup}$_{x \in H \cap S_{\mathbb{X}}} \|Tx\| < \|T\| $ where $H$ is a hyperplane such that $ x_0 \bot_{B} H.$ 
\end{theorem}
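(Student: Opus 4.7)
The plan is to proceed by contradiction. Assume $\sup_{x \in H \cap S_\mathbb{X}} \|Tx\| = \|T\|$. Since $T$ attains its norm only at $\pm x_0$ and $x_0 \notin H$ (as $x_0 \bot_B H$ with $\|x_0\|=1$), this supremum is not attained on $H \cap S_\mathbb{X}$, and so there is a sequence $\{h_n\} \subset H \cap S_\mathbb{X}$ with $\|Th_n\| \to \|T\|$. The strategy is then to split $T$ as a sum $A_1 + A_2$ of two operators, each Birkhoff--James orthogonal to $T$, and exploit that $T$ is not orthogonal to itself, thereby violating the right-additivity of $\bot_B$ at $T$ that would follow from smoothness.

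To construct the splitting, I would use the characterization of Birkhoff--James orthogonality due to James: the relation $x_0 \bot_B H$ furnishes $f \in \mathbb{X}^*$ with $\|f\|=1$, $f(x_0)=1$, and $\ker f = H$. Every $z \in \mathbb{X}$ then decomposes uniquely as $z = f(z)x_0 + (z - f(z)x_0)$ with $z - f(z)x_0 \in H$. Define $A_1, A_2 \in B(\mathbb{X}, \mathbb{Y})$ by
\[
A_1(z) = f(z)\,Tx_0, \qquad A_2(z) = T\bigl(z - f(z)x_0\bigr).
\]
Both are clearly bounded linear operators and $A_1 + A_2 = T$.

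Next I would verify that $T \bot_B A_1$ and $T \bot_B A_2$. For any $\lambda \in \mathbb{R}$, evaluating at $x_0$ gives $(T+\lambda A_1)(x_0) = (1+\lambda)Tx_0$, hence $\|T+\lambda A_1\| \geq |1+\lambda|\|T\|$; evaluating at $h_n$ (noting $f(h_n)=0$) gives $(T+\lambda A_1)(h_n) = Th_n$, so $\|T+\lambda A_1\| \geq \|Th_n\| \to \|T\|$. Combining the two estimates yields $\|T+\lambda A_1\| \geq \|T\|$ for every $\lambda$, i.e., $T \bot_B A_1$. For $A_2$, the equality $A_2 x_0 = 0$ gives $(T+\lambda A_2)(x_0) = Tx_0$, hence $\|T+\lambda A_2\| \geq \|T\|$ directly, so $T \bot_B A_2$.

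Finally, $T \not\bot_B T$, since $\|T + (-1)T\| = 0 < \|T\|$, so $T \not\bot_B (A_1+A_2)$, contradicting the smoothness of $T$. The only real subtlety is justifying the decomposition and the boundedness of $A_1, A_2$, both of which follow immediately from the norm-one support functional $f$ associated with the relation $x_0 \bot_B H$; once this is in place, the verifications of orthogonality are one-line computations using $x_0$ and the approximating sequence $\{h_n\}$.
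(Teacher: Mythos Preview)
Your proof is correct and follows essentially the same approach as the paper's: both argue by contradiction, take an approximating sequence in $H\cap S_{\mathbb{X}}$, split $T=A_1+A_2$ via the projection onto $\mathrm{span}\{x_0\}$ along $H$, and verify $T\bot_B A_1$ using the sequence and $T\bot_B A_2$ using $x_0$. Your explicit use of the norm-one support functional $f$ just makes the decomposition and the boundedness of $A_1,A_2$ more transparent than the paper's ``easy to verify''; the evaluation at $x_0$ in your argument for $T\bot_B A_1$ is superfluous (the sequence alone suffices), but harmless.
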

\begin{proof}
If possible let {\em sup}$_{x \in H \cap S_{\mathbb{X}}} \|Tx\| = \|T\| .$ Then there exists $ \{x_n\} \subset H \cap S_{\mathbb{X}} $ such that $ \|Tx_n\| \longrightarrow \|T\|.$  Every element $ z \in S_{\mathbb{X}} $ can be written as $ z = \alpha x_0 + h $ for some scalar $\alpha $ and $ h \in H.$ Define operators $ A_1, A_2 : \mathbb{X} \longrightarrow \mathbb{Y} $ as follows : 
\[ A_1(z) = \alpha  Tx_0, ~A_2(z) = Th.  \]
It is easy to verify that both $A_1,A_2 $ are bounded linear operators. Now $ \| T + \lambda A_1 \| \geq \| (T + \lambda A_1) x_n \| = \| Tx_n\| \rightarrow \|T\|$ so that $ T \bot_{B} A_1.$  Similarly $ \| T + \lambda A_2 \| \geq \| (T + \lambda A_2) x_0 \| = \| Tx_0 \| = \|T\|$ so that $ T \bot_{B} A_2.$ 
But  $ T = A_1 + A_2 $ which shows that $ T $ is not orthogonal to $ A_1 + A_2.$ This contradiction proves the result.
\end{proof}

We also have the following theorem, the proof of which follows in the same way as Theorem ~\ref{theorem:smooth compact necessary}.
\begin{theorem}\label{theorem:bounded unique}
Let $\mathbb{X}$ be a Banach space, $\mathbb{Y}$ be a normed  space and  $T \in B(\mathbb{X},\mathbb{Y})$ be a smooth point. If $ \| Tx_1 \| = \|Tx_2 \| = \|T\|$ for $ x_1, x_2 \in S_{\mathbb{X}} $ then $ x_1 = \pm x_2.$ 
\end{theorem}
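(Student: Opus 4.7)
The plan is to mimic the first half of the proof of Theorem~\ref{theorem:smooth compact necessary}, now producing bounded (rather than compact) perturbations. Suppose for contradiction that $x_1, x_2 \in S_{\mathbb{X}}$ satisfy $\|Tx_1\| = \|Tx_2\| = \|T\|$ but $x_1 \neq \pm x_2$. I aim to construct $A_1, A_2 \in B(\mathbb{X}, \mathbb{Y})$ with $T \bot_B A_1$, $T \bot_B A_2$ and $A_1 + A_2 = T$; since $T \neq 0$ and $T \not\bot_B T$ (take $\lambda = -1$), this contradicts the smoothness of $T$.

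For the construction, I invoke the Hahn--Banach theorem to obtain a closed hyperplane $H_1$ with $x_1 \bot_B H_1$. Choosing the scalar $a$ so that $a x_1 + x_2 \in H_1$ and applying the Birkhoff--James inequality with $h = -(a x_1 + x_2) \in H_1$ yields $|a| = \|a x_1\| \leq \|a x_1 - (a x_1 + x_2)\| = \|x_2\| = 1$. I then pick a closed subspace $H_2$ of codimension one in $H_1$ with $(a x_1 + x_2) \bot_B H_2$. Every $z \in \mathbb{X}$ admits the unique expansion
\[
z = (\alpha + a\beta)\, x_1 + \beta\, x_2 + h_2, \qquad h_2 \in H_2,
\]
and the usual Birkhoff--James projection bound makes $z \mapsto \alpha$ and $z \mapsto \beta$ bounded scalar functionals, which in turn forces $z \mapsto h_2$ to be bounded. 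Define
\[
A_1(z) = (\alpha + a\beta)\, Tx_1, \qquad A_2(z) = \beta\, Tx_2 + T h_2,
\]
both bounded linear operators in $B(\mathbb{X}, \mathbb{Y})$ satisfying $A_1 + A_2 = T$ by construction.

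It remains to verify $T \bot_B A_j$ by direct substitution. Since $x_1 \bot_B H_1$, the expansion of $x_2$ gives $\alpha = -a$, $\beta = 1$, $h_2 = 0$, so $A_1 x_2 = (-a + a)\, Tx_1 = 0$, whence $\|(T + \lambda A_1) x_2\| = \|Tx_2\| = \|T\|$ for all $\lambda \in \mathbb{R}$ and $T \bot_B A_1$. Analogously, $x_1$ expands as $\alpha = 1$, $\beta = 0$, $h_2 = 0$, giving $A_2 x_1 = 0$ and $T \bot_B A_2$. Smoothness of $T$ then forces $T \bot_B (A_1 + A_2) = T$, which is false, contradicting our assumption. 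Hence $x_1 = \pm x_2$. No genuine new obstacle arises compared to Theorem~\ref{theorem:smooth compact necessary}: since the argument only uses $A_1, A_2 \in B(\mathbb{X}, \mathbb{Y})$ and never needs their compactness, the same construction goes through without the compactness hypothesis on $T$.
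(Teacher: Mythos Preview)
Your proof is correct and follows essentially the same route the paper indicates: the paper does not write out a separate argument for this theorem but simply says the proof ``follows in the same way as Theorem~\ref{theorem:smooth compact necessary},'' and your construction of $A_1, A_2$ via the nested hyperplanes $H_1 \supset H_2$ is exactly that argument, with the only change being that $A_1, A_2$ are now merely bounded rather than compact. Your explicit verification that $A_1 x_2 = 0$ and $A_2 x_1 = 0$ (hence $T \bot_B A_i$ by evaluating at a norm-attaining vector) and your justification of boundedness via the Birkhoff--James projection inequality fill in details the paper leaves implicit, but the overall strategy is identical.
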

Abatzoglou \cite{A} studied  the smoothness of bounded linear operators on a Hilbert space,  we here give an alternative proof of the same.

\begin{theorem}
Let $\mathbb{H}$ be a  Hilbert space. Then $T \in B(\mathbb{H})$ is a smooth point if and only if $T$ attains norm only at $\pm x_0$ and sup$\{\|Ty\|: x_0\bot y, y\in S_\mathbb{H}\} < \|T\|$.
\end{theorem}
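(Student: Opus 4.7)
The plan is to prove each implication separately, leaning on Theorem~\ref{theorem:Hilbert bounded} for sufficiency and Theorems~\ref{theorem:bounded unique} and~\ref{theorem:bounded complmnt attain} for necessity.

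\textbf{Sufficient direction.} Assume $T$ attains its norm only at $\pm x_0$ and $\sup\{\|Ty\|: x_0 \bot y,\ y \in S_\mathbb{H}\} < \|T\|$. Setting $H_0 = \text{span}\{x_0\}$, a one-dimensional (hence finite-dimensional) subspace, one has $M_T = \{\pm x_0\} = S_{H_0}$ and $H_0^\perp = x_0^\perp$, so $\|T\|_{H_0^\perp} < \|T\|$. Theorem~\ref{theorem:Hilbert bounded} then yields: for every $A \in B(\mathbb{H})$, $T \bot_B A$ if and only if $\langle Tx_0, Ax_0\rangle = 0$. Right-additivity of Birkhoff-James orthogonality at $T$ is immediate from bilinearity of the inner product: if $T \bot_B A_1$ and $T \bot_B A_2$ then $\langle Tx_0, A_1 x_0\rangle = \langle Tx_0, A_2 x_0\rangle = 0$, hence $\langle Tx_0, (A_1+A_2)x_0\rangle = 0$, i.e., $T \bot_B (A_1+A_2)$.

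\textbf{Necessary direction.} Assume $T$ is smooth. Granted that $T$ attains its norm, Theorem~\ref{theorem:bounded unique} ensures norm attainment is unique up to sign, $M_T = \{\pm x_0\}$. The hyperplane $H := x_0^\perp$ satisfies $x_0 \bot_B H$ (since $\|x_0 + \lambda h\|^2 = 1 + \lambda^2\|h\|^2 \geq 1$ for $h \in H$), so Theorem~\ref{theorem:bounded complmnt attain} gives $\sup\{\|Ty\|: y \in H \cap S_\mathbb{H}\} < \|T\|$, as required.

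\textbf{Main obstacle: smoothness forces norm attainment.} The one piece not covered directly by earlier results is the claim that a smooth $T \in B(\mathbb{H})$ must attain its norm. I would argue by contrapositive. Suppose $T$ fails to attain its norm; let $\{u_n\} \subset S_\mathbb{H}$ be a maximizing sequence and, by weak compactness of $B_\mathbb{H}$, pass to a subsequence with $u_n \rightharpoonup u_0$. If $\|u_0\| = 1$, weak convergence combined with convergence of the norms would force $u_n \to u_0$ strongly in Hilbert space, yielding $\|Tu_0\| = \|T\|$, a contradiction; hence $\|u_0\| < 1$. Using the Bhatia--\v{S}emrl/Paul characterization ($T \bot_B A$ iff some maximizing sequence $\{x_n\}$ satisfies $\langle Tx_n, Ax_n\rangle \to 0$), one then constructs two operators $A_1, A_2 \in B(\mathbb{H})$---for instance, suitable rank-one operators built from $u_0$ and from the weakly vanishing component of $\{u_n\}$---such that $T \bot_B A_1$ and $T \bot_B A_2$ but $T \not\bot_B (A_1+A_2)$. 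This contradicts smoothness and completes the proof. The delicate point is the explicit choice of $A_1$ and $A_2$: one wants $\langle Tu_n, A_i u_n\rangle \to 0$ along the same maximizing sequence for $i=1,2$, while making $\langle Tu_n, (A_1+A_2)u_n\rangle$ bounded away from $0$; pairing each $A_i$ with $u_0$ on one coordinate and with a direction orthogonal to $u_0$ on the other (so that weak convergence of $\{u_n\}$ to $u_0$ kills only one of the two inner products at a time) is the natural recipe.
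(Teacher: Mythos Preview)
Your sufficiency argument matches the paper's. For necessity, your appeal to Theorems~\ref{theorem:bounded unique} and~\ref{theorem:bounded complmnt attain} is a clean shortcut; the paper reproves those constructions in this special case rather than citing them, so your route is actually more economical for steps (ii) and (iii).

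The genuine gap is in your norm-attainment step. Your sketch proposes rank-one operators $A_1,A_2$ designed so that $\langle Tu_n, A_i u_n\rangle \to 0$ along \emph{one chosen} maximizing sequence $\{u_n\}$. But the Bhatia--\v{S}emrl/Paul criterion says $T \not\perp_B (A_1+A_2)$ only if $\langle Tx_n,(A_1+A_2)x_n\rangle$ stays away from $0$ for \emph{every} maximizing sequence $\{x_n\}$, not just the one you picked. With rank-one (or even finite-rank) $A_i$ this is very hard to enforce: another maximizing sequence could be weakly null in a direction that kills both inner products simultaneously. Your ``natural recipe'' does not address this, and I do not see how to make it work as stated.

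The paper sidesteps this difficulty by arranging $A_1+A_2=T$, so that $\langle Tx_n,(A_1+A_2)x_n\rangle=\|Tx_n\|^2\to\|T\|^2\neq 0$ automatically for \emph{any} maximizing sequence. Concretely: when $T$ does not attain its norm one can find an \emph{orthonormal} maximizing sequence $\{e_n\}$ (e.g.\ a Weyl sequence for $T^*T$ at the spectral value $\|T\|^2$), extend it to an orthonormal basis, and set $A_1 e_{2n}=Te_{2n}$, $A_1=0$ on the rest of the basis, $A_2=T-A_1$. Then $\{e_{2n+1}\}$ witnesses $T\perp_B A_1$, $\{e_{2n}\}$ witnesses $T\perp_B A_2$, yet $A_1+A_2=T$. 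You should replace your rank-one sketch with this (or an equivalent) splitting.
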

\begin{proof} We first prove the necessary part in the following three steps : \\
(i) T attains norm at some point of $ S_{\mathbb{X}} $. \\
(ii) T attains norm at unique point $ x_0 \in S_{\mathbb{X}} $.\\
(iii)  {\em sup}$\{\|Ty\|: x_0\bot y, y\in S_\mathbb{H}\} < \|T\|$.

\smallskip
 Claim (i) : If $T$ does not attain its norm then exists a sequence $\{e_n\}$  of  orthonormal vectors such that $\|Te_n \| \longrightarrow \|T\|.$ Then there exists an orthonormal basis $\mathcal{B}$ containing $ \{e_n : n =1,2,\ldots\}.$ Define a linear operator $A_1$ on $H$ as : $ A_1e_{2n} = Te_{2n} $ and $A_1$ takes every other element of $\mathcal{B}$ to $0.$ We show that $A_1$ is bounded. Every element $z$ in $H$ can be written as $ z = x + y $ where $ x = \sum \alpha_{2n}e_{2n} $ and $ y \bot x.$ Now $ \|A_1 z \| = \| Tx \| \leq \|T\| \|x\| \leq \|T\| $ for every $z $ with $\|z\| =1.$ Thus $A_1$ is bounded. Consider another bounded operator $ A_2 = T - A_1.$ Next we claim that $ T \bot_{B} A_1 , T \bot_{B} A_2.$ Now $ \|Te_{2n+1}\| \longrightarrow \|T\| $ and $ \langle Te_{2n+1}, A_1 e_{2n+1} \rangle \longrightarrow 0 $ and so by Lemma 2 of \cite{P}, we get $ T \bot_{B}A_1.$ Similarly we can show that $ T \bot_{B}A_2.$ But $ T $ is not orthogonal to $A_1 + A_2 $  in the sense of Birkhoff-James- this contradicts the fact that $ T $is smooth. 

\smallskip
 Claim (ii) : Suppose $ \| Tx_1 \| = \| Tx_2 \| = \| T\| $ with $ x_1,x_2 \in S_{\mathbb{H}}, x_1 \neq \pm x_2.$ By Theorem 2.2 of Sain and Paul \cite{SP}, the norm attaining set $M_T $ is a unit sphere of some subspace of $\mathbb{H}.$ So without loss of generality we assume that $ x_1 \bot x_2.$ Let $ H_0 = \langle \{ x_1, x_2 \} \rangle.$ Then $ \mathbb{H} = H_0 \oplus H_0^{\bot} .$ Define $A_1, A_2 : \mathbb{H} \longrightarrow \mathbb{H} $ as follows : \\
$ A_1( c_1x_1 + c_2x_2 + h) = c_1 Tx_1, ~ A_2( c_1x_1 + c_2x_2 + h) = c_2 Tx_2 + Th$, where $ h \in H_0^{\bot}.$
Then as before it is easy to check that  both $A_1, A_2$ are bounded linear operators and $ T \bot_{B} A_1, T \bot_{B} A_2.$ But $ T = A_1 + A_2 $ and so $ T $ is not orthogonal to $ A_1 + A_2$ in the sense of Birkhoff-James, which contradicts the fact that $T$ is smooth.

\smallskip
 Claim (iii) Suppose $T$ attains norm only at $ \pm x_0 \in  S_\mathbb{H}\ $ and {\em sup}$\{\|Ty\|: x_0\bot y, y\in S_\mathbb{H}\} = \|T\|.$ Let $ H_0 = \langle \{ x_0\} \rangle .$ Then $ \mathbb{H} = H_0 \oplus H_0^{\bot} .$ Define $A_1,A_2 : \mathbb{H} \longrightarrow \mathbb{H} $ as follows: \\
Let $ z = x + y \in \mathbb{H} ,$ where $ x \in H_0, y \in  H_0^{\bot}. $ Then $ A_1 z = Tx, A_2 z = Ty.$ It is easy to check that both $A_1, A_2$ are bounded linear operators and $ T \bot_{B} A_1, T \bot_{B} A_2.$ But $ T = A_1 + A_2 $ and so $ T $ is not orthogonal to $ A_1 + A_2$ in the sense of Birkhoff-James, which contradicts the fact that $T$ is smooth.\\
\smallskip
We next prove the sufficient part. Assume $T$ attains norm only at $\pm x_0$ and sup$\{\|Ty\|: x_0\bot y, y\in S_\mathbb{H}\} < \|T\|$. Let $ T \bot_B A_i (i=1,2).$ Then by Theorem~\ref{theorem:Hilbert bounded}, $  Tx_0 \bot A_1x_0$ and $Tx_0 \bot A_2x_0.$ As $Tx_0 $ is a smooth point of $\mathbb{H}$ so $Tx_0 \bot (A_1+A_2)x_0.$ 
Then $ T\bot_B (A_1+A_2).  $ Thus $T$ is smooth.
\end{proof}
\begin{acknowledgement}\label{ackref}
We  thank Professor T.~K.~Mukherjee (Retd., Jadavpur University, India)
and Professor Abhijit Dasgupta (University of Detroit Mercy, USA)
for their invaluable suggestions while preparing this paper.
\end{acknowledgement}

\end{document}